\theoremstyle{plain}
\newtheorem{thm}{Theorem}
\newtheorem{cor}[thm]{Corollary}
\newtheorem{obs}[thm]{Observation}
\newtheorem{lem}[thm]{Lemma}
\newtheorem{con}[thm]{Conjecture}
\newtheorem*{GNCT}{The Guthrie-Nymann Classification Theorem}
\newtheorem*{OP}{Open Problem}
\theoremstyle{definition}
\newcommand{\N}{\mathbb N}
\begin{document}
\title{The Lebesgue measure of boundaries of multigeometric Cantorvals}
\author[P. Nowakowski and F. Prus-Wi\'{s}niowski]{ Piotr Nowakowski and Franciszek Prus-Wi\'{s}niowski}

\newcommand{\eacr}{\newline\indent}

\address{Faculty of Mathematics and Computer Science
\\University of \L\'{o}d\'{z}
\\Banacha 22,
90-238 \L\'{o}d\'{z}
\\Poland
\\ORCID: 0000-0002-3655-4991}
\email{piotr.nowakowski@wmii.uni.lodz.pl}

\address{\llap{*\,}Franciszek Prus-Wi\'{s}niowski\\
Instytut Matematyki\\
Uniwersytet Szczeci\'{n}ski\\
ul. Wielkopolska 15\\
PL-70-453 Szczecin\\
Poland\\ ORCID 0000-0002-0275-6122}
\email{franciszek.prus-wisniowski@usz.edu.pl}

\subjclass[2020]{Primary: 40A05; Secondary: 11B05, 28A75}
\keywords{ Achievement set, set of subsums, Cantorval, Kakeya conditions}

\date{\today}

\newcommand{\acr}{\newline\indent}

\begin{abstract}
We prove that the boundary of every multigeometric Cantorval is a null set, and extend this result to a larger class of standard achievable Cantorvals. In addition, we discuss the sets of uniqueness of achievement sets and show that they always belong to the Borel class $\mathcal{G}_\delta$.
\end{abstract}

\maketitle

\section{Introduction}

Given a sequence $(x_n)$ of real numbers, the set
$$
E\ =\ E(x_n)\ :=\ \left\{y\in\mathbb R:\ \exists\,A\subset\mathbb N \quad y=\sum_{n\in A}x_n\,\right\}
$$
is called the \textsl{achievement set} of the sequence $(x_n)$ (or the \textsl{set of subsums} of the series $\sum x_n$).
The study of achievement sets began over a century ago with two papers of Kakeya \cite{Kakeya}, \cite{Kakeya2}, who investigated the topological structure of achievement sets of positive sequences $(x_n)$ with $\sum x_n < \infty$. Sequences for which $x_n \not\to 0$ are generally not considered—only \cite{Jones} contains a handful of remarks and open problems in this direction. Throughout this note, we therefore assume $x_n \to 0$.

The definition of an achievement set extends naturally when $(x_n)$ are vectors in a Banach space. A particularly interesting case occurs when $(x_n)$ are $d$-dimensional vectors in $\mathbb R^d$. For instance, a classical result of Erdős and Straus states that the achievement set of the two-dimensional series $\sum_{n=1}^\infty(\tfrac1n,\,\tfrac1{n+1})$ has non-empty interior. Very recently, this result was generalized to three dimensions in  a beautifully written paper \cite{Kov25}, thereby answering affirmatively a question posed by Erdős and Graham \cite[p.~65]{EG}. Unfortunately, the theory of higher-dimensional achievement sets remains difficult and underdeveloped, mainly because the classification of possible topological types is still unresolved (cf. \cite{KN24} and references therein).
On the other hand, achievement sets of real sequences have been studied intensively in recent years, and even the most classical one-dimensional results have found applications in other areas of mathematics. For example, Kova\v{c} and Tao used basic Kakeya-type observations several times in their recent work (see \cite[p.~585]{KT}).

If $\sum x_n$ does not converge absolutely, then $E(x_n)$ is either the entire real line or a closed half-line \cite[Thm.~21.3]{BFPW1}, and thus has no real topological complexity. If $\sum x_n$ converges absolutely, then
\[
E(x_n)=E(|x_n|)+\sum_{x_n<0}x_n \quad \text{\cite[p. 348]{BFPW1}, \cite{H}}.
\]
Hence, the study of metric and topological properties of $E(x_n)$ reduces to the non-negative case. From now on, we assume $x_n \ge 0$ for all $n$, so that $\sum x_n$ is a convergent series of non-negative, monotone terms. Kakeya proved that in this case, $E(x_n)$ is always compact and perfect.

For such a series $\sum x_n$, define
$$
F_k\ =\ F_k(x_n)\ :=\ \left\{ y\in\mathbb R:\ \exists_{A\subset\{1,2,\ldots,k\}}\quad y=\sum_{n\in A}x_n \right\}, \qquad k\in\mathbb N,
$$
with $F_0 := \{0\}$. Similarly, let $E_k = E_k(x_n) := E\bigl((x_n)_{n>k}\bigr)$ be the achievement set of the $k$-th remainder $r_k = \sum_{n>k} x_n$ for $k\in\mathbb N_0$. In particular, $E = F_k + E_k$ for every $k\in\mathbb N_0$.

Given a closed set $A \subset \mathbb R$, the bounded components of $\mathbb R \setminus A$ are called \textsl{$A$-gaps}, and the non-trivial components of $A$ are called \textsl{$A$-intervals}. The cornerstone of one-dimensional achievement set theory is the Guthrie–Nymann Classification Theorem \cite{GN88} (see also \cite{NS}).

\begin{GNCT}
The set $E$ of all subsums of an absolutely convergent series is always of one of the following four types:
\begin{itemize}
\item[(i)] a finite set;
\item[(ii)] a multi-interval set;
\item[(iii)] a Cantor set;
\item[(iv)] a Cantorval.
\end{itemize}
\end{GNCT}

A \textsl{multi-interval set} is the union of finitely many closed bounded intervals. A \textsl{Cantor set} is a set homeomorphic to the classical ternary Cantor set.
\textsl{Cantorvals} are the most intricate achievement sets: they are compact sets $A \subset \mathbb R$ such that every endpoint of an $A$-gap is simultaneously the limit of a sequence of $A$-gaps and of a sequence of $A$-intervals. Cantorvals were introduced in \cite{MO} under the name $M$-Cantorvals. Mendes and Oliveira showed that all Cantorvals are mutually homeomorphic \cite[Appendix]{MO}.
Clearly, all Cantor sets are homeomorphic, while multi-interval sets are homeomorphic if and only if they have the same number of components. The same holds for finite sets. Thus, the notion of the \textsl{topological type} of an achievement set is well justified. A natural example of a Cantorval was given by Guthrie and Nymann \cite[p.~326]{GN88}: the \textsl{model Cantorval}, obtained by taking the ternary Cantor set $C$ and adjoining all the intervals removed in the odd-numbered steps of its standard construction.

A natural question is how to determine the topological type of $E(x_n)$ from the terms of $\sum x_n$. Kakeya already studied the relationship between the terms $x_n$ and the remainders $r_n = \sum_{i>n} x_i$, restricting to convergent series of positive terms (which, as noted, entails no loss of generality). The set
\[
K(x_n) := \{n \in \mathbb N : x_n > r_n \}
\]
is called the set of \textsl{Kakeya conditions}, while
\[
K^c(x_n) := \{n \in \mathbb N : x_n \le r_n \}
\]
is the set of \textsl{reversed Kakeya conditions}.
Every subset of $\mathbb N$ falls into one of three categories: finite sets, cofinite sets (with finite complement), and infinite sets with infinite complement. The first two categories already appeared in Kakeya’s early results relating the terms of a series to the topology of its achievement set.

\begin{thm}
\label{t1}
$E(x_n)$ is a multi-interval set if and only if $K(x_n)$ is finite.
\end{thm}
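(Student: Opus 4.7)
The plan is to reduce both directions to Kakeya's classical criterion stating that for a nonincreasing sequence of nonnegative terms summing to $s$, the achievement set equals $[0,s]$ if and only if $x_n \le r_n$ for every $n$.

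For the implication $K(x_n)$ finite $\Rightarrow$ $E$ is a multi-interval set, I will pick $N$ large enough so that $x_n \le r_n$ whenever $n > N$. Kakeya's criterion applied to the tail $(x_n)_{n > N}$ then yields $E_N = [0, r_N]$, and the factorization $E = F_N + E_N$ recorded in the introduction realizes $E$ as the Minkowski sum of the finite set $F_N$ with a closed interval. That sum is manifestly a finite union of closed intervals.

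For the converse I will argue by contrapositive: starting from an infinite $K(x_n)$, I will exhibit infinitely many $E$-gaps, which rules out $E$ being a multi-interval set. For each $n \in K(x_n)$ I first observe that every element of $E_{n-1}$ lies either in $[0, r_n]$ (if the term $x_n$ is omitted) or in $[x_n, r_{n-1}]$ (if $x_n$ is used). Because $x_n > r_n$, the open interval $(r_n, x_n)$ is disjoint from $E_{n-1}$, and since $r_n, x_n \in E_{n-1}$ it is an $E_{n-1}$-gap.

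The main technical point is transferring this gap to $E = F_{n-1} + E_{n-1}$, and monotonicity is exactly what makes it work: the smallest positive element of $F_{n-1}$ is $x_{n-1} \ge x_n$, so any nonzero $f \in F_{n-1}$ has $f + E_{n-1} \subset [x_n, \infty)$ and misses $(r_n, x_n)$, while the translate $0 + E_{n-1}$ avoids this interval by the previous step. Hence $(r_n, x_n)$ is an $E$-gap, and since $r_n \in E$ it is precisely the left endpoint of that gap. As $n$ ranges over the infinite set $K(x_n)$, the values $r_n$ are pairwise distinct (they are strictly decreasing whenever infinitely many $x_k$ are positive; the remaining case in which only finitely many terms are positive forces $E$ to be finite, hence already not a multi-interval set). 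Thus $E$ has infinitely many gaps, which completes the contrapositive.
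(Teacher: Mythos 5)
The paper states this result as Kakeya's classical theorem and offers no proof of its own, so there is nothing internal to compare against; judged on its merits, your argument is correct and follows the standard route. The forward direction (finite $K$ implies multi-interval) via $E = F_N + E_N$ with $E_N=[0,r_N]$ is fine, though note that you are importing the interval criterion ``$x_n\le r_n$ for all $n$ $\Rightarrow$ $E=[0,s]$'' as a black box; since you only use that one implication (whose usual proof is a short greedy/induction argument, independent of the statement being proved), there is no circularity, but spelling it out would make the proof self-contained. The converse is handled cleanly: for $n\in K$ the decomposition $E_{n-1}=E_n\cup(x_n+E_n)$ gives the gap $(r_n,x_n)$ in $E_{n-1}$, and monotonicity ($\min(F_{n-1}\setminus\{0\})=x_{n-1}\ge x_n$) correctly transfers it to $E=F_{n-1}+E_{n-1}$. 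Two small remarks: the theorem implicitly lives under the paper's standing convention of positive terms (as Kakeya assumed), and this is needed in your forward direction as well --- with only finitely many positive terms, $K$ is finite but $E$ is a finite set rather than a multi-interval set, so the degenerate case you dispose of in the converse should really be excluded globally by that convention; and in the converse that degenerate case is in fact vacuous, since $K$ infinite together with monotonicity forces all terms to be positive, making the $r_n$ strictly decreasing exactly as you use.
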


\begin{thm}
\label{t2}
If $\mathrm{card}\,K^c(x_n)<\infty$, then $E(x_n)$ is a Cantor set.
\end{thm}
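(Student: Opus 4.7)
The plan is to reduce to Kakeya's original splitting argument by passing to a tail. Since $K^c(x_n)$ is finite, there exists $N \in \mathbb N_0$ such that $x_n > r_n$ for every $n > N$; in other words, all Kakeya conditions hold for the shifted sequence $(x_n)_{n > N}$, whose achievement set is $E_N$. The goal then splits into two parts: show that $E_N$ is a Cantor set, and lift this to $E$ using $E = F_N + E_N$.

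The heart of the argument is showing that $E_N$ is a Cantor set. I would exploit the recursion
\[
E_n \;=\; E_{n+1} \,\cup\, (x_{n+1} + E_{n+1}),
\]
combined with $E_{n+1} \subset [0, r_{n+1}]$ and $x_{n+1} + E_{n+1} \subset [x_{n+1}, x_{n+1}+r_{n+1}]$. For $n \geq N$ the inequality $x_{n+1} > r_{n+1}$ forces these two pieces to be disjoint, separated by the genuine $E_n$-gap $(r_{n+1}, x_{n+1})$. Iterating from level $N$, at depth $k$ the set $E_N$ appears as a pairwise disjoint union of $2^k$ compact pieces, each a translate of $E_{N+k}$ and thus of diameter at most $r_{N+k}$. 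Since $r_{N+k} \to 0$, any nondegenerate interval contained in $E_N$ would eventually exceed the diameter of the unique piece containing it (the pieces at each depth are disjoint, so a connected subset must lie entirely in one of them), forcing $\mathrm{int}\,E_N = \emptyset$. Together with Kakeya's general theorem (compactness and perfectness of $E_N$), this makes $E_N$ a compact, perfect, nowhere-dense subset of $\mathbb R$, hence a Cantor set.

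To conclude, I would observe that $E = F_N + E_N$ exhibits $E$ as a finite union of translates of the Cantor set $E_N$. Such a union is compact, perfect (every point lies inside one of the perfect translates, and that translate has no isolated points), and nowhere dense (a finite union of nowhere-dense closed sets), hence a Cantor set. Alternatively, one can appeal to the Guthrie--Nymann Classification Theorem: $E$ is infinite, so it is not finite; it has empty interior, so it is neither a multi-interval set nor a Cantorval; thus it must be a Cantor set. The main obstacle I anticipate is pinning down the nowhere-density of $E_N$ cleanly from the iterated binary splitting, in particular verifying that the $2^k$ pieces at depth $k$ are pairwise disjoint (which follows inductively because the two halves at each splitting live in disjoint intervals); once that is in place, the passage from $E_N$ to $E$ via the finite set $F_N$ is routine.
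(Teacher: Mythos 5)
Your argument is correct, but there is nothing in the paper to compare it against: Theorem \ref{t2} is stated as one of Kakeya's classical results and the paper gives no proof of it. Your route is essentially the standard classical one. Passing to the tail $E_N$ on which all Kakeya conditions $x_n>r_n$ hold, splitting $E_n=E_{n+1}\cup(x_{n+1}+E_{n+1})$ into two compact pieces separated by the genuine gap $(r_{n+1},x_{n+1})$, and iterating to write $E_N$ as a pairwise disjoint union of $2^k$ translates of $E_{N+k}$, each of diameter at most $r_{N+k}\to 0$, correctly yields $\mathrm{int}\,E_N=\emptyset$; together with Kakeya's compactness and perfectness theorem and the fact that a nonempty compact, perfect, nowhere dense subset of $\mathbb R$ is homeomorphic to the Cantor set, this settles the tail, and the lift via $E=F_N+E_N$ (finite union of translates: compact, without isolated points, nowhere dense) is routine, as you say. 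Your inductive verification that the $2^k$ pieces are disjoint is exactly the right point to insist on, and it is also in the spirit of the brick-and-iteration analysis the paper carries out in Observation \ref{lastcor}, which handles the more delicate situation where equalities $x_n=r_n$ are allowed and the pieces may share endpoints; under your strict inequalities that complication does not arise. One small point worth making explicit: finiteness of $K^c(x_n)$ also forces infinitely many positive terms (any $n$ with $x_n=0$ satisfies $x_n\le r_n$ and so lies in $K^c(x_n)$), which is what guarantees that $E_N$ is perfect rather than finite; with that remark your appeal to Kakeya's perfectness theorem is airtight. Your alternative ending via the Guthrie--Nymann Classification Theorem also works, though it is historically anachronistic for what is meant to be Kakeya's theorem; the direct characterization of the Cantor set is the cleaner choice.
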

Kakeya conjectured in 1914 that the implication can be reversed \cite{Kakeya2}.

Every achievement set can be expressed as
$$
E\ =\ \bigcap_{n=1}^\infty I_n \qquad \text{\cite[Fact 21.8]{BFPW1}},
$$
where each \textsl{$n$-th iteration} $I_n$ is a multi-interval set defined by
$$
I_n\ :=\ \bigcup_{f\in F_n}\bigl(f+[0,r_n]\bigr)\ =\ \bigcup_{f\in F_n}[f,f+r_n].
$$
It is not difficult to see that the topological types of $E$ and $E_k$ are always the same, independently of $k$.

\subsection*{Multigeometric sequences}

Multigeometric sequences (or series) form a class that is relatively easy to analyze from the viewpoint of topological classification of achievement sets. Informally, a \textsl{multigeometric sequence} is a monotone mixture of finitely many convergent geometric sequences with the same ratio.

Formally, let $m \in \mathbb N$, $q \in (0,1)$, and $k_1 \ge k_2 \ge \ldots \ge k_m > 0$. A multigeometric sequence is then given by $a_n = k_i q^j$, where $(j,i)$ is the unique pair with $j \in \mathbb N$ and $i \in \{1,\ldots,m\}$ such that $n = (j-1)m + i$. Multigeometric series are denoted by
\[
\sum(k_1,\ldots,k_m;\,q),
\]
and their achievement sets by $E(k_1,\ldots,k_m;\,q)$.

Historically, the first counterexamples to Kakeya’s 1914 hypothesis were constructed as achievement sets of multigeometric sequences (see \cite{WS}, without proof, and \cite{F}, with proof). Later, Jones described an infinite family of multigeometric sequences whose achievement sets are Cantorvals \cite[p.~515]{Jones}. In \cite{BFS}, the first sufficient condition for a multigeometric series to yield a Cantorval was established \cite[Thm.~2.1]{BFS}. Further deep results from \cite{BBFS} apply to certain multigeometric sequences, providing insight into how the topological type depends on the ratio $q$ for a fixed set of initial terms $k_1q, k_2q, \ldots, k_mq$ (see \cite[pp.~1025--1026]{BBFS}).

An important example is the Guthrie–Nymann Cantorval $E(3,2;\tfrac14)$ \cite{GN88}, \cite{BPW}, which is the achievement set of a multigeometric sequence as well.

\subsection*{Tight subsets and intervals in achievement sets}

Given $\varepsilon > 0$, a finite nonempty set $B \subset \mathbb R$ is called \textsl{$\varepsilon$-tight} if either $\mathrm{card}\,B = 1$ or the distance between any two consecutive elements of $B$ (in the natural order of $\mathbb R$) does not exceed $\varepsilon$.
An $\varepsilon$-tight subset $C \subseteq B$ is called \textsl{maximal} if it is not properly contained in another $\varepsilon$-tight subset of $B$. The family of all maximal $\varepsilon$-tight subsets of $B$ will be denoted by $\mathcal{M}_\varepsilon(B)$.

Every nonempty finite subset of $\mathbb R$ admits a unique decomposition into finitely many disjoint maximal $\varepsilon$-tight subsets. For such a set $B$, define
$$
\Delta_\varepsilon B\ :=\ \max\bigl\{\mathrm{diam}(C): C \in \mathcal{M}_\varepsilon(B)\bigr\}.
$$

The following gives a characterization of sequences whose achievement sets are neither finite nor Cantor sets \cite[Prop.~2.8]{MNP}.

\begin{thm}
\label{charint}
Let $\sum x_n$ be a convergent series of non-negative, non-increasing terms. Then the following are equivalent:
\begin{itemize}
\item[(i)] The achievement set $E(x_n)$ contains an interval.
\item[(ii)] $\lim\limits_{n\to\infty} \Delta_{r_n} F_n > 0$.
\item[(iii)] $\lim\limits_{k\to\infty} \Delta_{r_{n_k}} F_{n_k} > 0$ for some increasing sequence $(n_k)$ of indices.
\end{itemize}
\end{thm}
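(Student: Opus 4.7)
My plan is to read condition (ii) geometrically through the iterations $I_n$. Since $I_n = \bigcup_{f \in F_n} [f, f+r_n]$, two of its defining intervals overlap iff the corresponding points of $F_n$ are within distance $r_n$. Hence the connected components of $I_n$ are in bijection with the maximal $r_n$-tight subsets of $F_n$: for $C = \{c_1 < \cdots < c_p\} \in \mathcal{M}_{r_n}(F_n)$ the associated component is the interval $[c_1,\, c_p + r_n]$, of length $\operatorname{diam}(C) + r_n$. Writing $L_n$ for the maximum length of a component of $I_n$, this gives the key identity
$$
L_n \ =\ \Delta_{r_n} F_n \ +\ r_n.
$$

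Next I would show that the limit in (ii) \emph{always} exists, so that (ii) and (iii) become the same positivity statement. Since $F_{n+1} = F_n \cup (F_n + x_{n+1})$ and $r_n = x_{n+1} + r_{n+1}$, a direct inclusion check gives $I_{n+1} \subset I_n$; hence each component of $I_{n+1}$ is contained in a single component of $I_n$, so $L_{n+1} \le L_n$. Thus $(L_n)$ is decreasing and bounded below by $0$, so it converges to some $L^* \ge 0$, and $r_n \to 0$ yields $\Delta_{r_n} F_n \to L^*$. Both (ii) and (iii) therefore reduce to $L^* > 0$, and the implication (ii) $\Rightarrow$ (iii) is trivial.

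The implication (i) $\Rightarrow$ (ii) is then easy: any nondegenerate $[a,b] \subset E$ is connected and contained in every $I_n$, hence in a single component of $I_n$, giving $L_n \ge b - a$ for all $n$ and $L^* \ge b - a > 0$. The substantive direction is (iii) $\Rightarrow$ (i). Having reduced to $L^* > 0$, I would pick a maximal-length component $J_n = [a_n, b_n]$ of $I_n$ for each $n$, so that $b_n - a_n = L_n \to L^*$. Since all $J_n$ lie in the compact interval $[0, \sum x_n]$, a subsequence yields $a_n \to a$ and $b_n \to b$ with $b - a = L^* > 0$. To see $[a,b] \subset E$, fix $m$: for every $n \ge m$ the connected set $J_n \subset I_n \subset I_m$ lies in one component of $I_m$, and since $I_m$ has only finitely many components, a further subsequence keeps $J_n$ inside one fixed component $J^{(m)}$, so the Hausdorff limit $[a,b]$ sits inside the closed set $J^{(m)} \subset I_m$. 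Diagonalizing over $m$ gives $[a,b] \subset \bigcap_m I_m = E$.

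The main obstacle, as I see it, is this final Hausdorff-limit argument: one must simultaneously trap the limit interval inside a single component of each $I_m$, and the natural way to do it is a diagonal extraction, refining the subsequence for each $m$ in turn. Everything else is bookkeeping once the identity $L_n = \Delta_{r_n} F_n + r_n$ and the monotonicity $L_{n+1} \le L_n$ are in place.
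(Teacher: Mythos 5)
Your proof is correct, but there is nothing in the paper to compare it against: the paper does not prove Theorem \ref{charint}, it simply imports it as Proposition 2.8 of \cite{MNP}. On its own merits, your argument is sound and essentially the natural one given how the $\varepsilon$-tight machinery is set up. The key identification is right: the maximal $r_n$-tight subsets of $F_n$ are exactly the blocks of consecutive elements of $F_n$ whose internal gaps are $\le r_n$, separated from the rest by gaps $>r_n$, so the components of $I_n$ are the intervals $[\min C,\,\max C+r_n]$ with $C\in\mathcal{M}_{r_n}(F_n)$, giving $L_n=\Delta_{r_n}F_n+r_n$ (it is worth writing out this block description explicitly, since it is what makes the correspondence exact, including the case where two bricks merely touch at an endpoint). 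The monotonicity $I_{n+1}\subset I_n$ via $F_{n+1}=F_n\cup(F_n+x_{n+1})$ and $r_n=x_{n+1}+r_{n+1}$ is correct and yields that $(L_n)$ decreases, so the limit in (ii) always exists; this legitimately collapses (ii) and (iii) into the single condition $L^*>0$, and in fact you prove slightly more than the statement asks (existence of the limit; monotonicity of $(x_n)$ is never used). The implication (i) $\Rightarrow$ (ii) is immediate as you say, and the compactness-plus-diagonal extraction for (iii) $\Rightarrow$ (i) works because each $I_m$ has finitely many components, each a closed interval, so the limit interval $[a,b]$ of the maximal components $J_n$ is trapped in one component of every $I_m$ and hence lies in $\bigcap_m I_m=E$ (the cited Fact 21.8 of \cite{BFPW1}). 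No gaps.
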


\vspace{.3in}
\section{The Lebesgue measure of the boundary}

Almost all known examples of achievable Cantorvals have boundaries of measure zero. This applies both to multigeometric examples (the Guthrie–Nymann Cantorval \cite[Thm.~5.3]{BPW}, the Ferens Cantorvals \cite[Thm.~8]{BP}, and the Guthrie–Nymann–Jones Cantorvals \cite[Thm.~3.1]{B19}) and to non-multigeometric ones (the generalized Ferens Cantorvals \cite[Thm.~3.3]{MNP} and the Marchwicki–Miska Cantorvals \cite[Thm.~9]{PP24}). The only family of achievable Cantorvals whose boundary measure remained unresolved were the Kyiv Cantorvals \cite{VMPS19}. In this note, however, we prove that their boundaries are also null sets.

The question of whether all Cantorvals, or at least all multigeometric ones, have null boundaries was first raised in 2016 by an anonymous referee of \cite{BP}, and has since been repeated several times during the Workshops on Postmodern Real Analysis in B\c{e}dlewo, Poland.

\begin{thm}
\label{lns}
The boundary of any multigeometric Cantorval  $E\,=\,E(k_1,k_2,\ldots,k_m; q)$ is a Lebesgue null set.
\end{thm}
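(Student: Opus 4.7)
The plan is to exploit the self-similar decomposition $E = \bigcup_{f \in F} \phi_f(E)$, where $F := \{\sum_{i \in A} k_i : A \subseteq \{1,\ldots,m\}\}$ and $\phi_f(x) := qx + qf$. The key topological observation is
\[
\partial E \,\subseteq\, \bigcup_{f \in F} \phi_f(\partial E).
\]
Indeed, if $y \in E$ is interior to some $\phi_f(E)$, then an open neighbourhood of $y$ lies in $\phi_f(E) \subseteq E$, so $y \in \mathrm{int}(E)$; contrapositively, any $y \in \partial E$ must lie in $\phi_f(\partial E)$ for some $f$. Iterating $k$ times and taking Lebesgue measure yields $|\partial E| \le (q|F|)^k |\partial E|$.

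When $q|F| < 1$ the finiteness of $|\partial E|$ immediately forces $|\partial E| = 0$; moreover the same iteration bounds $|I_{mk}|$ by $(q|F|)^k \cdot q(k_1+\cdots+k_m)/(1-q) \to 0$, so in that regime $E$ is actually a Cantor set and the theorem is vacuous. The substantive case is $q|F| \ge 1$, where the naive bound degenerates to $|\partial E| \le |\partial E|$. To make progress I would refine the inclusion by discarding absorbed addresses: call $\vec f = (f_1,\ldots,f_k) \in F^k$ \emph{absorbed} if $\phi_{f_1} \circ \cdots \circ \phi_{f_k}(E)$ is contained in the union of the interiors of the other level-$k$ pieces, so that its boundary image is in $\mathrm{int}(E)$ and contributes nothing to $\partial E$. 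Writing $N_k$ for the count of non-absorbed addresses, one gets $|\partial E| \le N_k q^k |\partial E|$, and the task reduces to $N_k q^k \to 0$. Because $E$ is a Cantorval, Theorem~\ref{charint} guarantees $\lim_n \Delta_{r_n} F_n > 0$, so the partial-sum sets $F_n$ organise into finitely many tight clusters of uniformly positive diameter; these clusters are precisely what force pieces deep inside them to be absorbed.

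The main obstacle is the quantitative rate. Qualitatively, "a positive proportion is absorbed" is clear from the cluster structure; converting this into an exponential decay $N_k \le C(q|F|\alpha)^k$ with $\alpha < 1$ requires controlling, uniformly in $k$, what fraction of level-$k$ addresses sits strictly inside the union of interiors of its neighbours. This is a combinatorial-geometric problem about the iterated Minkowski sum $qF + q^2 F + \cdots + q^k F$; the arithmetic is rigid for multigeometric $E$ but tangled by the specific values of the $k_i$. I would bootstrap from the nontrivial $E$-interval $J$ supplied by the Cantorval hypothesis and use the self-similar distribution of its sub-copies $\phi_{\vec f}(J) \subseteq E$ to show that at every level the $E$-intervals cover all but a geometrically decaying share of pieces. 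Extracting the explicit rate $\alpha < 1$ from the Cantorval assumption is the heart of the argument.
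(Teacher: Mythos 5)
Your reduction steps are fine as far as they go: $\partial E\subseteq\bigcup_f\phi_f(\partial E)$, and a level-$k$ piece meeting $\partial E$ can never be ``absorbed'', so $\lambda(\partial E)\le N_kq^k\lambda(\partial E)$ is correct. But the proof stops exactly where the real difficulty begins, and you say so yourself: nothing in the proposal establishes $N_kq^k\to0$, nor even $N_kq^k<1$ for a single $k$, which is what the strategy needs. Note moreover that $N_k$ dominates the number of level-$k$ bricks (of length $q^kr_0$) required to cover $\partial E$, so the missing estimate is essentially the assertion that the upper Minkowski content (box-counting mass at scale $q^k$) of $\partial E$ vanishes --- a statement \emph{stronger} than the theorem being proved. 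The heuristic ``the Cantorval interval $J$ and its sub-copies absorb a definite fraction of addresses at every level'' does not supply a mechanism: the non-absorbed addresses concentrate near $\partial E$, precisely where the $E$-intervals thin out, and the uniform-in-$k$ proportion $\alpha<1$ would have to be extracted from the arithmetic of $qF+q^2F+\cdots+q^kF$, which you leave untouched. As it stands this is a genuine gap, not a technical loose end.

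For contrast, the paper avoids all counting and uses self-similarity only through measure: since $E$ is a Cantorval, $\lambda(\mathrm{int}\,E)>0$, and for every $x\in E$ and every $h\in(0,r_0]$ one can place inside $[x-h,x+h]$ a whole brick $f+[0,r_{mn}]$ with $r_{mn}\ge qh$; the copy $f+\mathrm{int}\,E_{mn}\subseteq\mathrm{int}\,E$ then shows that $\mathrm{int}\,E$ has lower density at least a fixed positive constant (of order $q\,\lambda(\mathrm{int}\,E)/r_0$) at \emph{every} point of $E$. If $\lambda(\partial E)>0$, the Lebesgue Density Theorem would give a point of $\partial E$ at which the complement of $\partial E$ has density zero, contradicting that bound. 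You might salvage your setup by replacing the global count $N_k$ with this local density estimate: the exact self-similarity you already invoke (bricks of prescribed length at every scale, each carrying a scaled copy of $\mathrm{int}\,E$ of proportional measure) is all that is needed, and no rate $\alpha<1$ has to be produced.
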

\begin{proof}
Clearly, $\lambda(\textnormal{int}\,E)\le\lambda(E)$. Suppose that $\lambda(\textnormal{int}\,E)<\lambda(E)$. By the Lebesgue Density Theorem, there exists a set $S\subset \textnormal{Fr}\,E$ such that $\lambda(S)=\lambda(\textnormal{Fr}\,E)>0$ and, for every $x\in S$,
$$
\lim_{h\to 0+}\frac{\lambda(S^c\cap[x-h,x+h])}{2h}\ =\ 0.
$$
Since $\textnormal{int}\,E \subset S^c$, we shall contradict this equality by proving that
$$
\forall_{x\in C} \qquad \liminf_{h\to0+}\frac{\lambda(\textnormal{int}\,E\cap[x-h,x+h])}{2h}\ \ \ge\ \ \frac{\lambda(\textnormal{int}\,E)}{2qr_0}\ >\ 0.
$$
Indeed, take any $x\in C$ and $h\in(0,\,r_0]$. Let $n$ be the unique positive integer such that
\begin{equation}
\label{dow1}
h\,\in\,(r_{mn},\,r_{m(n-1)}]\ =\ (q^nr_0,\,q^{n-1}r_0].
\end{equation}
Then there exists at least one $f\in F_{nm}$ such that
\begin{equation}
\label{dow2}
[f,\,f+r_{mn}]\ \subset [x-h,\,x+h].
\end{equation}
By the self-similarity of $E$, we have
\begin{equation}
\label{dow3}
\frac{\lambda(\textnormal{int}\,E_{kn})}{r_{mn}}\ =\ \frac{\lambda(\textnormal{int}\,E)}{r_0}.
\end{equation}
Clearly, $f+\textnormal{int}\,E_{kn}\subset \textnormal{int}\,E$, and thus
\begin{align*}
\frac{\lambda(\textnormal{int}\,E\cap[x-h,x+h])}{2h}\ &\overset{\eqref{dow2}}{\ge}\ \frac{\lambda(\textnormal{int}\,E_{mn})}{r_{mn}}\cdot\frac{r_{mn}}{2h} \\[.1in]
&\overset{\eqref{dow1}, \eqref{dow3}}{\ge}\ \frac{\lambda(\textnormal{int}\,E)}{r_0}\cdot \frac{r_{mn}}{2r_{m(n-1)}}\ =\ \frac{\lambda(\textnormal{int}\,E)}{2qr_0}\ > 0.
\end{align*}
This contradiction shows that $\lambda(\textnormal{int}\,E)<\lambda(E)$ is impossible. Hence the proof is complete.
\end{proof}

The problem of whether the boundary has measure zero for every achievable Cantorval has remained open for more than eight years. Although the general answer is unknown, we now introduce a special family of achievable Cantorvals (called \textsl{standard}) for which the boundaries always have measure zero. This family contains all multigeometric Cantorvals as well as many non-multigeometric ones, thereby extending Theorem \ref{lns}.

We consider sequences $t=(t_i)_{i=1}^m$, $m\in\mathbb N\cup\{\infty\}$, with entries $0$ or $1$. They will serve as indices.  For each sequence $t=(t_i)_{i=1}^m$ we assign a value $x_t\in E$ by setting $x_t=\sum_{i=1}^mt_ix_i$. Given a finite $t=(t_i)_{i=1}^m$, a set of the form $x_t+[0,\,r_m]$ is called the $t$-brick of order $m$ and denoted by $B_t$. Although $t$ may not be unique, the order of a brick is uniquely determined by its length.

Since we are defining a special family of achievable Cantorvals $E(x_n)$, it follows from Kakeya’s classical observations (cf. \cite[Cor. 3]{PP24}) that $\textnormal{card}\,K(x_n)=\textnormal{card}\, K^c(x_n)=\infty$.

The $m$-th iteration of $E(x_n)$ is $I_m=\bigcup_{|t|=m}B_t$. We also write $I_0=[0,\,r_0]$. Clearly, $I_{m-1}=I_m$ if and only if $m\in K^c(x_n)$, while $I_{m+1}\varsubsetneq I_m$ if and only if $m\in K(x_n)$.

Let $\sum x_n$ be a convergent series of positive monotone terms such that $E=E(x_n)$ is a Cantorval. Let $P_n$ denote the longest component of $E_n$. We say that $E$ is a \textsl{standard Cantorval} if
$$\limsup\limits_{n\to \infty} \frac{|P_n|}{r_n} >0.$$

Now we can state the generalization of Theorem \ref{lns}.
\begin{thm}
\label{stan}
Let $\sum x_n$ be a convergent series of positive monotone terms such that $E=E(x_n)$ is a standard Cantorval. Then
\[
\lambda(E) = \lambda(\textnormal{int}\,E).
\]
\end{thm}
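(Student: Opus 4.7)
The plan is to adapt the proof of Theorem \ref{lns}, retaining its Lebesgue Density Theorem framework but replacing the self-similarity input by the standard-Cantorval hypothesis. First I would assume, for contradiction, that $\lambda(\textnormal{Fr}\,E) > 0$. By the Lebesgue Density Theorem there exists $S \subseteq \textnormal{Fr}\,E \subseteq E$ with $\lambda(S) = \lambda(\textnormal{Fr}\,E)$ whose every point is a density point of $S$. Fix such a point $x \in S$. Since $\textnormal{int}\,E \cap \textnormal{Fr}\,E = \emptyset$, we have $\textnormal{int}\,E \subseteq S^c$, so a contradiction will follow if I can produce a constant $c > 0$ and radii $h_k \to 0^+$ with
\[
\frac{\lambda(\textnormal{int}\,E \cap [x-h_k,\,x+h_k])}{2h_k} \;\ge\; c.
\]

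Next, I would invoke the standard-Cantorval hypothesis to fix $\delta > 0$ and indices $n_1 < n_2 < \ldots$ with $|P_{n_k}| \ge \delta\,r_{n_k}$ for every $k$. Taking $h_k := r_{n_k}$, the identity $E = F_{n_k} + E_{n_k}$ together with $E_{n_k} \subseteq [0,\,r_{n_k}]$ places $x$ inside some brick $[f,\,f+r_{n_k}]$ with $f \in F_{n_k}$, and this brick lies in the window $[x-h_k,\,x+h_k]$. The translate $f + P_{n_k}$ is then an interval of length at least $\delta\,r_{n_k} = \delta\,h_k$, contained in $E$ and in the window. Its relative interior is an open subset of $E$, hence a subset of $\textnormal{int}\,E$, which yields the desired lower bound with $c = \delta/2$ and contradicts the density-point property of $x$.

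I do not anticipate any serious obstacle. The role previously played by the self-similarity identity $\lambda(\textnormal{int}\,E_{mn})/r_{mn} = \lambda(\textnormal{int}\,E)/r_0$ is taken over, without any computation, by the standard hypothesis itself: it directly supplies intervals of proportional length $\delta\,r_{n_k}$ inside $E$ at each scale $r_{n_k}$. The one point worth double-checking is that $P_{n_k}$ is indeed a genuine interval (not a singleton) of length at least $\delta r_{n_k}$; this is automatic because $E_{n_k}$ shares the Cantorval topological type of $E$ and therefore has nondegenerate components. Everything else, including the containment $f + P_{n_k} \subseteq [x-h_k,\,x+h_k]$ and the conversion of the density-point equality into the sought contradiction, is a direct transcription of the corresponding step in the proof of Theorem \ref{lns}.
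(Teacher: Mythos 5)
Your proposal is correct and follows essentially the same route as the paper: a contradiction via the Lebesgue Density Theorem, locating $x$ in a brick $[f,\,f+r_{n_k}]\subset[x-r_{n_k},\,x+r_{n_k}]$ of the iteration $I_{n_k}$, and using the translated longest component $f+P_{n_k}\subset E$ to force $\lambda(\textnormal{int}\,E\cap[x-r_{n_k},x+r_{n_k}])/(2r_{n_k})\ge\delta/2>0$ along the subsequence supplied by the standard hypothesis. The only cosmetic differences are that you phrase the density condition for $S^c$ rather than $(\textnormal{Fr}\,E)^c$ and you spell out explicitly that the open interior of $f+P_{n_k}$ lies in $\textnormal{int}\,E$, steps the paper leaves implicit.
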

\begin{proof}
Suppose, to the contrary, that $\lambda(E) \neq \lambda(\textnormal{int}\,E)$. Then $\lambda(\textnormal{Fr}\,E) >0$. By the Lebesgue Density Theorem, there exists a measurable set $S\subset \textnormal {Fr}\,E$ with $\lambda(S)=\lambda(\textnormal{Fr}\,E)$ such that for every $x\in S$,
\begin{equation}
\label{boni}
\lim\limits_{h\to 0^+}\frac{\lambda((\textnormal{Fr}\,E)^c\cap [x-h,x+h])}{2h} = 0.
\end{equation}
Let $(n_k)$ be an increasing sequence of positive integers such that $\lim\limits_{k\to\infty}\frac{|P_{n_k}|}{ r_{n_k}}$ exists and is positive. Take any $x\in S$. Since $S\subset I_{n_k}$ for every $k$, and since each brick of order $n_k$ has length $r_{n_k}$, there exists a brick $J$ of order $n_k$ such that $x\in J\subset[x-r_{n_k},\,x+r_{n_k}]$. Hence
$$
\frac{\lambda(\textnormal{int}\,E\,\cap\,[x-r_{n_k},\,x+r_{n_k}])}{2r_{n_k}}\ \ \ge\ \ \frac{|P_{n_k}|}{2r_{n_k}}.
$$
Since $r_{n_k}\to 0$, we obtain
\begin{align*}
\limsup_{h\to 0^+}\frac{\lambda((\textnormal{Fr}\,E)^c\cap [x-h,x+h])}{2h}\ &\ge\ \limsup_{k\to\infty}\frac{\lambda(\textnormal{int}\,E\,\cap\,[x-r_{n_k},x+r_{n_k}])}{2r_{n_k}} \\
&\ge\ \frac12\lim\limits_{k\to\infty}\frac{|P_{n_k}|}{r_{n_k}}\ >\ 0,
\end{align*}
contradicting \eqref{boni}. Thus, $\lambda(E) = \lambda(\textnormal{int}\,E)$.
\end{proof}

\begin{thm}
Every multigeometric Cantorval is standard.
\end{thm}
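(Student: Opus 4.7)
The plan is to exploit the self-similarity of multigeometric sequences. For the series $\sum(k_1,\ldots,k_m;q)$ one has $x_{n+m}=q\,x_n$ for every $n\in\N$, so the tail starting after the $mk$-th term is exactly $q^k$ times the original sequence. Consequently
\[
r_{mk}\ =\ q^k\, r_0 \qquad \textrm{and} \qquad E_{mk}\ =\ q^k\cdot E
\]
for every $k\in\N$, and this scaling identity will drive the whole argument.

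Next I would invoke the Cantorval hypothesis on $E$. By the definition recalled in the introduction, every Cantorval contains at least one nontrivial $E$-interval, since each gap endpoint is accumulated by $E$-intervals. Fix such an $E$-interval $J\subset E$ and set $\ell:=|J|>0$. Under the scaling identity above, $q^k J$ is an interval component of $E_{mk}$, whence the longest component $P_{mk}$ of $E_{mk}$ satisfies $|P_{mk}|\ge q^k\ell$.

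Finally, I would evaluate the relevant ratio along the subsequence $n_k:=mk$:
\[
\frac{|P_{mk}|}{r_{mk}}\ \ge\ \frac{q^k\ell}{q^k r_0}\ =\ \frac{\ell}{r_0}\ >\ 0,
\]
and therefore $\limsup_{n\to\infty} |P_n|/r_n\ \ge\ \ell/r_0>0$, which is precisely the standardness condition. I do not anticipate a genuine obstacle: the only point worth emphasizing is that a Cantorval, by its very definition, contains some interval component of positive length, so the choice of $J$ with $\ell>0$ is automatic, and the rest is a one-line consequence of self-similarity.
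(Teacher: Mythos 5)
Your proposal is correct and follows essentially the same route as the paper: both arguments use the self-similarity $E_{mn}=q^nE$, $r_{mn}=q^nr_0$ of the multigeometric achievement set and compare a fixed interval component $J$ of $E$ (the paper takes the longest component, you take any nontrivial one, which is equally valid) with the scaled tail to get $|P_{mn}|/r_{mn}\ge |J|/r_0>0$. Nothing is missing.
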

\begin{proof}
Let $J$ be the longest component of $E=E(k_1,k_2,\ldots,k_m;\,q)$. By self-similarity of $E$, we have that $q^n J$ is the longest component of $E_{mn}$. Moreover, $r_{mn}=q^nr_0$, so
\[
\frac{|q^nJ|}{r_{mn}} = \frac{|J|}{r_0},
\]
and thus $E$ is a standard Cantorval.
\end{proof}

Actually, there are three families of achievable Cantorvals not contained in the family of multigeometric Cantorvals. It turns out that all three consist solely of standard Cantorvals, and therefore have Lebesgue measure equal to the measure of their interior. One such family is that of the \textsl{generalized Ferens Cantorvals}, introduced in \cite{MNP} to provide examples of achievable Cantorvals $A$ such that the algebraic sum of any finite number of copies of $A$ remains a Cantorval \cite[Thm. 4.17]{MNP}. This family includes all multigeometric Cantorvals studied in \cite{F}, \cite{BP}, and \cite{BGM}. For the reader’s convenience, we recall the definition.

Given a sequence $m=(m_n)$ of positive integers with $m_n\ge 2$, a sequence $k=(k_n)$ of positive integers with $k_n>m_n$ for all $n$, and a sequence $q=(q_n)$ of positive numbers, we say that a series $\sum a_j$ is a \textsl{generalized Ferens series} (abbreviated GF series) if, for $i\in\mathbb N$ and $j\in\{K_{i-1}+1, \, K_{i-1}+2,\,\ldots,\,K_i\}$, where $K_p:=\sum_{i=1}^pk_i$ for $p\in\mathbb N$ and $K_0:=0$, we have
$$
a_j\ =\ a_j(k,m,q)\ :=\ (m_i+K_i-j)q_i\ .
$$
For positive integers $p,r$ with $r>p\ge 2$, let
$$
s(p,r):=\sum_{i=1}^{r-1}(p+i).
$$
We write $s_n:=s(m_n,k_n)$. Then, for any $n\in\mathbb N$, the set of all possible subsums formed from $\{a_j:\ j\in\{K_{i-1}+1, \ldots, K_i\}\}$ (without repetition) is exactly
$$
\bigl(\{0\}\cup\{m_n,\,m_n+1,\ldots,\,s_n\}\cup\{s_n+m_n\}\bigr)q_n \qquad\text{(see \cite[Fact 3]{BPW})}.
$$

It was proved in \cite[Thm. 3.1]{MNP} that if $\sum a_j(m,k,q)$ is a convergent GF series satisfying
\begin{equation*}
\label{gf1}
\tag{GF$_1$} \ \forall_{n\in\mathbb N}\qquad q_n\ \le\ (s_{n+1}-m_{n+1}+1)q_{n+1},
\end{equation*}
and
\begin{equation*}
\label{gf2}
\tag{GF$_2$} \ \forall_{n\in\mathbb N} \qquad m_nq_n\ >\ \sum_{i>n}(s_i+m_i)q_i,
\end{equation*}
then $E(a_j)$ is a Cantorval. These Cantorvals are called \textsl{generalized Ferens Cantorvals}. We now show that all of them are standard. For this we need, besides the proof of \cite[Thm. 3.1]{MNP}, the following simple fact.

\begin{lem}
\label{delH}
If $\sum a_i$ and $\sum b_i$ are two convergent series of positive terms, then for every $k\in\mathbb N$,
$$
\inf_{i>k}\,\frac{a_i}{b_i}\ \le\ \ \frac{\sum_{i>k}a_i}{\sum_{i>k}b_i}\ \ \le\ \sup_{i>k}\,\frac{a_i}{b_i}.
$$
\end{lem}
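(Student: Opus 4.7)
The plan is to reduce the lemma to the elementary mediant-type inequality: if $\alpha \le a_i/b_i \le \beta$ for every $i > k$, then multiplying by the positive quantities $b_i$ gives $\alpha b_i \le a_i \le \beta b_i$, and summing term by term over $i>k$ yields $\alpha \sum_{i>k} b_i \le \sum_{i>k} a_i \le \beta \sum_{i>k} b_i$. Dividing through by $\sum_{i>k} b_i$, which is strictly positive because all $b_i$ are positive, gives the claim.

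Concretely, I would set $\alpha := \inf_{i>k} a_i/b_i$ and $\beta := \sup_{i>k} a_i/b_i$, note the pointwise bounds $\alpha b_i \le a_i \le \beta b_i$, and then sum. Convergence of both series (which is part of the hypothesis) lets us pass from finite partial sums to the full tails without any tail-control issues; the summed inequalities persist in the limit because termwise inequalities for nonnegative sequences survive summation.

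The only slight subtlety is the edge case $\beta = +\infty$, which can in principle occur since $b_i$ may tend to zero faster than $a_i$; in that case the upper inequality is vacuous and nothing needs to be proved. The inf case presents no difficulty because $\alpha \ge 0$ by positivity of $a_i$ and $b_i$. Thus the main (and essentially only) obstacle — verifying that the denominator $\sum_{i>k} b_i$ is strictly positive so that division is legitimate — is immediate from the assumption that all $b_i$ are positive, making the proof a short and routine one-step argument.
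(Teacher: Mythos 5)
Your proof is correct and follows essentially the same elementary route as the paper: you bound $a_i$ termwise by $\alpha b_i$ and $\beta b_i$ and sum, while the paper phrases the identical idea as the identity $\frac{\sum_{i>k}a_i}{\sum_{i>k}b_i}=\sum_{i>k}\frac{b_i}{\sum_{j>k}b_j}\cdot\frac{a_i}{b_i}$, exhibiting the quotient as a weighted average of the ratios. Your handling of the edge case $\sup_{i>k}a_i/b_i=+\infty$ and of the positivity of the denominator is fine, so nothing further is needed.
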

\begin{proof}
This follows directly from
$$
\frac{\sum_{i>k}a_i}{\sum_{i>k}b_i}\ = \ \sum_{i>k}\,\frac{b_i}{\sum_{j>k}b_j}\cdot\frac{a_i}{b_i},
$$
since the coefficients $\frac{b_i}{\sum_{j>k}b_j}$ are nonnegative and sum to 1.
\end{proof}

From the proof of \cite[Thm. 3.1]{MNP}, it follows that $E(a_j)$ contains an interval $P$ of length at least $\sum_{i=1}^\infty(s_i-m_i)q_i$. Define, for $n\in\mathbb N$, $q^{(n)}:=(q_i)_{i=n+1}^\infty$, $k^{(n)}:=(k_i)_{i=n+1}^\infty$, $m^{(n)}:=(m_i)_{i=n+1}^\infty$. Then for each $n$, the series $\sum_{j=1}^\infty a_j(k^{(n)},m^{(n)},q^{(n)})$ is a generalized Ferens series satisfying both (GF$_1$) and (GF$_2$). Its achievement set contains an interval $P^{(n)}$ of length at least
$$
\sum_{i=1}^\infty\bigl(s_i^{(n)}-m_i^{(n)}\bigr)q_i^{(n)}\ =\ \sum_{i>n}(s_i-m_i)q_i.
$$
Moreover,
$$
E\bigl(a_j(k^{(n)},m^{(n)},q^{(n)})\bigr)\ =\ E_{K_n}\bigl(a_j(k,m,q)\bigr).
$$
Hence, for $\sum a_j(k,m,q)$ we obtain
\begin{align*}
\limsup\limits_{n\to\infty}\frac{|P_n|}{r_n}\ &\ge\ \limsup\limits_{n\to\infty}\frac{|P^{(n)}|}{r_{K_n}}\\
&=\ \limsup\limits_{n\to\infty}\frac{\sum_{i>n}(s_i-m_i)q_i}{\sum_{i>n}(s_i+m_i)q_i}\\[.1in]
&\overset{\text{Lemma \ref{delH}}}{\ge}\ \limsup\limits_{n\to\infty}\,\inf_{i>n}\frac{s_i-m_i}{s_i+m_i}\\
&=\ \liminf\limits_{n\to\infty}\left(1-\frac{2m_i}{s_i+m_i}\right).
\end{align*}
Since $s_i\ge\frac32m_i(m_i+1)$ \cite[Fact 3]{BPW} and $m_i\ge 2$, we conclude that $\limsup\limits_{n\to\infty}\frac{|P_n|}{r_n}\ge\frac7{11}$. This completes the proof of the following theorem.

\begin{thm}
\label{GFC}
All generalized Ferens Cantorvals satisfying conditions (GF$_1$) and (GF$_2$) are standard Cantorvals.
\end{thm}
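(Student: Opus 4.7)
The plan is to verify the defining condition of a standard Cantorval, namely $\limsup_{n\to\infty} |P_n|/r_n > 0$, and then invoke Theorem \ref{stan}. Since the block structure of a GF series is natural only at indices of the form $K_n$, I would restrict attention to the subsequence $(K_n)$; this is sufficient to bound a limsup from below. The idea is that the truncated tail of a GF series is again a GF series, so the interval-producing machinery of \cite[Thm. 3.1]{MNP} applies uniformly at every level.

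The key structural observation is that dropping the first $n$ blocks of a GF series yields another GF series: the shifted parameters $k^{(n)}=(k_i)_{i>n}$, $m^{(n)}=(m_i)_{i>n}$, $q^{(n)}=(q_i)_{i>n}$ define a GF series whose achievement set equals $E_{K_n}\bigl(a_j(k,m,q)\bigr)$. The conditions (GF$_1$) and (GF$_2$) are universal tail conditions, so they are inherited by every such shifted series; hence \cite[Thm. 3.1]{MNP} applies and exhibits an interval $P^{(n)}\subset E_{K_n}$ of length at least $\sum_{i>n}(s_i-m_i)q_i$. Independently, from the known per-block sum formula $(\{0\}\cup\{m_i,\ldots,s_i\}\cup\{s_i+m_i\})q_i$ one reads off $r_{K_n}=\sum_{i>n}(s_i+m_i)q_i$, which gives
\[
\frac{|P_{K_n}|}{r_{K_n}}\ \ge\ \frac{\sum_{i>n}(s_i-m_i)q_i}{\sum_{i>n}(s_i+m_i)q_i}.
\]

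The final step is to eliminate the $q_i$-dependence. Lemma \ref{delH} bounds the ratio of the two series from below by $\inf_{i>n}(s_i-m_i)/(s_i+m_i)=1-\sup_{i>n}2m_i/(s_i+m_i)$. Using the combinatorial inequality $s_i\ge \tfrac32 m_i(m_i+1)$ from \cite[Fact 3]{BPW} together with $m_i\ge 2$, one obtains $s_i+m_i\ge m_i(3m_i+5)/2$; the resulting lower bound for $(s_i-m_i)/(s_i+m_i)$ is monotone in $m_i$ and minimised at $m_i=2$, producing the uniform estimate $(s_i-m_i)/(s_i+m_i)\ge 7/11$. Consequently $\limsup_{n\to\infty}|P_n|/r_n\ge 7/11>0$, so $E$ is standard.

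The step I expect to require the most care is verifying that the shifted parameters indeed define a GF series obeying both (GF$_1$) and (GF$_2$); once this is settled, the remainder is routine. Because both hypotheses are phrased as $\forall_{n\in\mathbb N}$ inequalities relating block $n$ to blocks of index greater than $n$, the truncation is entirely cosmetic and the main obstacle is really bookkeeping rather than any new mathematical difficulty.
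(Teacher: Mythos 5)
Your proposal is correct and follows essentially the same route as the paper: you pass to the subsequence $(K_n)$, identify $E_{K_n}$ with the achievement set of the shifted GF series (noting that (GF$_1$) and (GF$_2$) are inherited by tails), take the interval of length at least $\sum_{i>n}(s_i-m_i)q_i$ from \cite[Thm.~3.1]{MNP}, compare with $r_{K_n}=\sum_{i>n}(s_i+m_i)q_i$ via Lemma~\ref{delH}, and conclude with the same $7/11$ bound from $s_i\ge\tfrac32 m_i(m_i+1)$ and $m_i\ge 2$. No substantive differences.
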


As an immediate consequence, Theorem \ref{stan} implies that the Lebesgue measure of the boundary of any such Cantorval is zero. This fact was originally established by direct computation in \cite[Thm. 3.3]{MNP}.

\medskip

Another family of non-multigeometric Cantorvals was introduced in \cite{PP24}, showing that $E(a_n)$ can be a Cantorval even when the set of indices $n$ with $a_n\le r_n$ has asymptotic density zero \cite[Cor. 8]{PP24}. These are the \textsl{Marchwicki--Miska Cantorvals}, defined as follows.

For a positive integer $n$, define
$$
b_1^n:=2^{n+1},\quad b_2^n:=2^n+1,\quad b_i^n:=2^{n+3-i}\ \ \text{for $3\le i\le n+2$.}
$$
The set of all subsums of $b_i^n$, $i=1,\,2,\,\ldots,\,n+2$, is exactly
\begin{multline*}
\left\{2i-2: \ i\in\{1,2,\ldots,2^{n-1}\}\right\}\\
\cup\ \left\{i:\ 2^n\le i\le 4\cdot2^n-1\right\}\\
\cup\ \left\{4\cdot2^n-1+2i:\ i\in\{1,2,\ldots,2^{n-1}\}\right\}.
\end{multline*}

Now take any increasing sequence $N=(N_s)_{s=1}^\infty$ of positive integers with $N_s-N_{s-1}\ge 3$ for all $s\in\mathbb N$ (with $N_0:=0$). Define $n_s:=N_s-N_{s-1}-2$ for all $s$, and
$$
q_1:=1,\qquad q_{k+1}:=\frac1{3\cdot2^{n_{k+1}}}q_k \quad (k\in\mathbb N).
$$
We then define the sequence $a_j=a_j(N)$ by setting $a_{N_{k-1}+i}:=b_i^{n_k}q_k$ for $k\in\mathbb N$ and $i\in\{1,2,\ldots,n_k+2\}$. Thus, $(a_j)$ consists of groups of terms: the $k$-th group corresponds to $n=n_k$, scaled by $q_k$, with indices from $N_{k-1}+1$ to $N_k$, containing $n_k+2$ terms. In particular,
$$
r_{N_k}=\sum_{j=k}^\infty(5\cdot2^{n_{j+1}}-1)q_{j+1}\qquad\text{for all $k$.}
$$
It was shown in the proof of \cite[Thm. 7]{PP24} that $E(a_j)=E(a_j(N))$ is a Cantorval containing a component interval of length $\sum_{i=1}^\infty(3\cdot2^{n_i}-1)q_i$.

Define $N^{(s)}:=(N_k)_{k=s+1}^\infty$. Since $E_{N_s}(a_j)=q_{s+1}E(a_j(N^{(s)}))$ for all $s$, and $E(a_j(N^{(s)}))$ is also a Marchwicki--Miska Cantorval, we see that $E_{N_s}(a_j)$ contains an interval $P^{(s)}$ of length $\sum_{i=s+1}^\infty(3\cdot2^{n_i}-1)q_i$. The estimate proceeds as in the Ferens case:
\begin{align*}
\limsup\limits_{s\to\infty}\frac{|P_s|}{r_s}\ &\ge\ \limsup\limits_{s\to\infty}\frac{|P_{N_s}|}{r_{N_s}}\\
&\ge\ \limsup\limits_{s\to\infty}\frac{|P^{(s)}|}{r_{N_s}}\\
&\ge\ \limsup\limits_{s\to\infty}\frac{\sum_{i=s+1}^\infty(3\cdot2^{n_i}-1)q_i}{\sum_{i=s+1}^\infty(5\cdot2^{n_i}-1)q_i}\\[.1in]
&\ge\ \limsup\limits_{s\to\infty}\inf_{i>s}\frac{3\cdot2^{n_i}-1}{5\cdot2^{n_i}-1}\ \ge\ \frac59.
\end{align*}

\begin{thm}
\label{MMstan}
All Marchwicki--Miska Cantorvals are standard.
\end{thm}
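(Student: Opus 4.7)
The plan is to verify the defining inequality $\limsup_{n\to\infty} |P_n|/r_n > 0$ along the distinguished subsequence $n = N_s$, which respects the block structure of the Marchwicki--Miska construction. Since the limsup over all $n$ dominates the limsup along any subsequence, a positive lower bound along $(N_s)$ suffices.

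First, I would exploit self-similarity. Deleting the first $N_s$ terms leaves a sequence which, up to the multiplicative factor $q_{s+1}$, is itself a Marchwicki--Miska sequence with shifted parameter $N^{(s)} := (N_k)_{k=s+1}^\infty$. Hence $E_{N_s}(a_j) = q_{s+1}\, E(a_j(N^{(s)}))$, and each interval component of the shifted Cantorval corresponds, after rescaling, to an interval component of $E_{N_s}(a_j)$ of proportional length.

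Second, I would invoke the interval supplied by the proof of \cite[Thm. 7]{PP24}: every Marchwicki--Miska Cantorval contains a component of length at least $\sum_{i \geq 1}(3\cdot 2^{n_i}-1)q_i$ evaluated with respect to its own defining data. Applied to $N^{(s)}$ and rescaled back, this produces an interval $P^{(s)} \subset E_{N_s}(a_j)$ of length at least $\sum_{i>s}(3\cdot 2^{n_i}-1)q_i$. Comparing with $r_{N_s}=\sum_{i>s}(5\cdot 2^{n_i}-1)q_i$ and invoking Lemma \ref{delH}, one obtains
\[
\frac{|P_{N_s}|}{r_{N_s}} \;\geq\; \frac{|P^{(s)}|}{r_{N_s}} \;\geq\; \inf_{i>s}\,\frac{3\cdot 2^{n_i}-1}{5\cdot 2^{n_i}-1}.
\]

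Finally, the structural constraint $N_s - N_{s-1} \geq 3$ forces $n_i \geq 1$ for every $i$. Since the function $x \mapsto (3x-1)/(5x-1)$ is increasing on $[2,\infty)$, its minimum along $x = 2^{n_i}$ is attained at $n_i = 1$, yielding the value $5/9$. Therefore the infimum above is uniformly at least $5/9$, and taking limsup in $s$ preserves this bound. The only part requiring real care, and the main obstacle I expect, is the bookkeeping of the scaling factor $q_{s+1}$ when translating statements about $E(a_j(N^{(s)}))$ into statements about the tail $E_{N_s}(a_j)$; once this accounting is pinned down, the conclusion is a routine arithmetic estimate.
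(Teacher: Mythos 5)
Your proposal is correct and follows essentially the same route as the paper: pass to the subsequence $(N_s)$, use the self-similarity $E_{N_s}(a_j)=q_{s+1}E(a_j(N^{(s)}))$ together with the interval of length $\sum_{i>s}(3\cdot 2^{n_i}-1)q_i$ supplied by the proof of \cite[Thm.~7]{PP24}, compare with $r_{N_s}=\sum_{i>s}(5\cdot 2^{n_i}-1)q_i$ via Lemma~\ref{delH}, and conclude with the bound $\tfrac59$ from $n_i\ge 1$. The scaling bookkeeping you flag is handled exactly as you describe, so no gap remains.
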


By Theorem \ref{stan}, their boundaries have Lebesgue measure zero, a fact previously computed directly in \cite[Thm. 9]{PP24}.

\medskip

The third large family of non-multigeometric achievable Cantorvals are the \textsl{Kyiv Cantorvals}. In this case, however, the Lebesgue measure of their boundaries has resisted direct computation, and therefore we devote the next section to them.

\vspace{.3in}
\section{The Kyiv Cantorvals}

The basic idea of the following construction is taken directly from \cite{VMPS19}, but our assumptions on the sequences $(s_k)$ and $(m_k)$ differ slightly, since we base our presentation on the concept of $\varepsilon$-tight subsets introduced in \cite{MNP}. We do not analyze the precise relationship between the two sets of assumptions, as no such comparison is needed here.

Let $(m_k)$ and $(s_k)$ be sequences of positive integers such that
\begin{gather}
s_n \ \ge \ 3m_n-4 \qquad \text{for all $n$,} \label{ass1}\\
m_n\ \ge \ 3 \qquad \text{for all $n$,} \label{ass2}\\
\limsup\limits_{n\to\infty}m_n\ \ge\ 4.\label{ass3}
\end{gather}

We now define a series $\sum d_i$ inductively by grouping consecutive terms and introducing a sequence  $(a_k)$ of principal values associated with each group. The $k$-th group of terms is indexed by $N_{k-1}+1,\,N_{k-1}+2,\,\ldots,\,N_k$. We require that the first $s_k+1$ terms of the $k$-th group equal $a_k$, while the remaining $m_k$ terms equal $\frac{m_k-1}{m_k}a_k$. In particular, $N_k-N_{k-1}=s_k+m_k+1$, and we set $N_0:=0$. The sum of all terms in the $k$-th group is
\begin{equation}
\label{alfA}
G_k\ :=\ \sum_{i=N_{k-1}+1}^{N_k}d_i\ =\ (s_k+m_k)a_k.
\end{equation}

The principal values $a_k$ are defined so that $\sum d_i=\sum_j G_j=1$ and
\begin{equation}
\label{betA}
\forall_{k\in\mathbb N}\qquad r_{N_k} \ =\ \sum_{i>N_k}d_i\ :=\ \frac{2a_k}{m_k}.
\end{equation}
In particular,
$$
1 = \sum_{i=1}^\infty d_i = \sum_{i=1}^{N_1}d_i + r_{N_1} = (s_1+m_1)a_1 + \frac{2a_1}{m_1},
$$
hence
$$
a_1 = \frac{m_1}{m_1^2+s_1m_1+2}.
$$
By induction, one shows easily that
$$
\forall_{n\in \N} \qquad a_n = \frac{2^{n-1}m_n}{\prod_{k=1}^n(m_k^2+s_km_k+2)}.
$$
Indeed,
$$
\frac{2a_k}{m_k} \,\overset{\eqref{betA}}{=}\, r_{N_k} \overset{\eqref{alfA}}{=}\, (s_{k+1}+m_{k+1})a_{k+1}+r_{N_{k+1}} \overset{\eqref{betA}}{=}\, (s_{k+1}+m_{k+1})a_{k+1}+\frac{2a_{k+1}}{m_{k+1}},
$$
which yields
$$
a_{k+1} = \frac{2m_{k+1}}{m_k(m_{k+1}^2+s_{k+1}m_{k+1}+2)}\, a_k.
$$

As a corollary \cite[Cor.~2.2]{VMPS19}, for all $k\in\mathbb N$ we obtain:
$$
r_{N_k} = \frac{2^k}{\prod_{i=1}^k(m_i^2+s_im_i+2)},\qquad
\frac{r_{N_{k+1}}}{r_{N_k}} = \frac2{m_{k+1}^2+s_{k+1}m_{k+1}+2},
$$
and
\begin{equation}
\label{doublestar}
\frac{a_{k+1}}{a_k} = \frac{2m_{k+1}}{m_k(m_{k+1}^2+s_{k+1}m_{k+1}+2)}.
\end{equation}

Now consider the set
$$
S_k := \{\sum_{i\in A} d_i:\ A \subset \{N_{k-1}+1,\ldots, N_k\}\},
$$
which contains
$$
D_k := \left\{\frac{a_k}{m_k}i:\ i\in\mathbb N,\ (m_k-3)m_k+2 \ \le\ i\ \le\ (s_k+3)m_k-2\right\}.
$$
Hence,
$$
\Delta_{\frac{a_k}{m_k}}S_k \ \ge\ (s_k-m_k+6)a_k - \frac{4a_k}{m_k}.
$$

We define large $\tfrac{a_k}{m_k}$-tight subsets of $F_{N_k}(d_i)$ by induction:
\begin{align*}
C_1 &:= D_1, \\
C_{n+1} &:= C_n + D_{n+1}.
\end{align*}
With this definition,
$$
\min C_n = \sum_{k=1}^n\left(m_k-3+\frac2{m_k}\right)a_k, \qquad
\max C_n = \sum_{k=1}^n\left(s_k+3-\frac2{m_k}\right)a_k.
$$

The proof of $\tfrac{a_k}{m_k}$-tightness of $C_k$ proceeds by induction. It reduces to showing that if $\alpha<\beta$ are two consecutive points of $C_k$, then
$$
\alpha + \max D_{k+1} + \frac{a_{k+1}}{m_{k+1}} \ \ge\ \beta + \min D_{k+1}.
$$
Since $\beta-\alpha \le \tfrac{a_k}{m_k}$ by the induction hypothesis, it suffices to prove
$$
\bigl(s_{k+1}+3-\tfrac1{m_{k+1}} - m_{k+1}+3-\tfrac2{m_{k+1}}\bigr)a_{k+1}\ \ge\ \frac{a_k}{m_k}.
$$
In light of \eqref{doublestar}, this is equivalent to
$$
\frac2{s_{k+1}+m_{k+1}+\tfrac2{m_{k+1}}}\ \ge\ \frac1{s_{k+1}-m_{k+1}+6-\tfrac3{m_{k+1}}},
$$
that is,
\begin{equation}
\label{estrella}
m_{k+1}(s_{k+1}-3m_{k+1}+12) - 8 \ \ge\ 0,
\end{equation}
which follows immediately from \eqref{ass1} and \eqref{ass2}.

This completes the proof that every set $C_k$ is $\tfrac{a_k}{m_k}$-tight, hence $r_{N_k}$-tight as well. Therefore,
$$
\Delta_{r_{N_k}}F_{N_k}(d_i) \ \ge\ \mathrm{diam}\,C_k,
$$
which implies
$$
\lim_{k\to\infty}\Delta_{r_{N_k}}F_{N_k}(d_i) \ \ge\ \lim_{k\to\infty}\mathrm{diam}\,C_k = \sum_{n=1}^\infty\left(s_n-m_n+6-\frac4{m_n}\right)a_n > 0.
$$
Hence $E(d_i)$ contains an interval by \cite[Prop.~2.8]{MNP}. Moreover, if $m_k>3$, then $d_{N_k}>r_{N_k}$, and by assumption \eqref{ass3}, $E(d_i)$ has infinitely many gaps. Consequently, by the Guthrie–Nymann Classification Theorem, $E(d_i)$ is a Cantorval.

\begin{thm}
\label{Kyiv}
Every Kyiv Cantorval is a standard Cantorval, and its measure equals the measure of its interior.
\end{thm}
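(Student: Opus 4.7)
The plan is to follow exactly the template that has already been successfully deployed above for the generalized Ferens and Marchwicki--Miska families: combine the self-similarity of the construction with Lemma \ref{delH} to bound $|P_{N_k}|/r_{N_k}$ from below by a positive absolute constant along the subsequence $(N_k)$, and then invoke Theorem \ref{stan} to conclude the equality of measures.

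First, I will exploit self-similarity: dividing every term of the tail $(d_i)_{i>N_k}$ by $r_{N_k}$ produces exactly a Kyiv series associated with the shifted sequences $(m_i)_{i>k}$ and $(s_i)_{i>k}$, which still satisfy \eqref{ass1}--\eqref{ass3}. An inductive check (analogous to the derivation of $a_{k+1}$ from $a_k$) shows that the principal values of this rescaled tail are exactly $\tilde{a}_n = a_{k+n}/r_{N_k}$. Consequently, the construction of the tight sets $C_j$ preceding the theorem applies verbatim to the tail and, after restoring the scaling factor $r_{N_k}$, identifies an interval inside $E_{N_k}(d_i)$ of length at least
$$
\sum_{n>k}\Bigl(s_n-m_n+6-\tfrac{4}{m_n}\Bigr)a_n.
$$
Denoting by $P_{N_k}$ the longest component of $E_{N_k}(d_i)$, this is the lower bound I need.

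Next, I rewrite $r_{N_k}=\sum_{n>k}G_n=\sum_{n>k}(s_n+m_n)a_n$ using \eqref{alfA} and apply Lemma \ref{delH} to the two series with positive coefficients $\bigl(s_n-m_n+6-\tfrac{4}{m_n}\bigr)a_n$ and $(s_n+m_n)a_n$ (both are strictly positive under \eqref{ass1}--\eqref{ass2}). This yields
$$
\frac{|P_{N_k}|}{r_{N_k}}\ \ge\ \inf_{n>k}\,\frac{s_n-m_n+6-\tfrac{4}{m_n}}{s_n+m_n}.
$$
A short calculation finishes the job: each summand ratio is increasing in $s_n$ for fixed $m_n$, so the worst case is $s_n=3m_n-4$, where the ratio reduces to $\tfrac{m_n+2}{2m_n}\ge\tfrac12$ since $m_n\ge 3$. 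Hence $|P_{N_k}|/r_{N_k}\ge\tfrac12$ for every $k$, so $\limsup_n |P_n|/r_n\ge\tfrac12>0$, proving that $E(d_i)$ is a standard Cantorval. The equality $\lambda(E)=\lambda(\textnormal{int}\,E)$ then follows at once from Theorem \ref{stan}.

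I do not foresee a serious obstacle. The only step demanding care is the self-similarity setup, where one must confirm that the rescaled tail is indeed a genuine Kyiv series satisfying \eqref{ass1}--\eqref{ass3} and that the relation $\tilde{a}_n = a_{k+n}/r_{N_k}$ holds, so that the tight-set construction transports the quantitative interval estimate correctly back to $E_{N_k}(d_i)$. Once this bookkeeping is in place, the estimate and the appeal to Theorem \ref{stan} are entirely mechanical.
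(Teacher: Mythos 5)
Your proposal is correct, and its skeleton is the same as the paper's: identify $E_{N_k}(d_i)$, via self-similarity, as $r_{N_k}$ times the Kyiv achievement set built from the shifted data $(s_n)_{n>k}$, $(m_n)_{n>k}$ (which indeed inherit \eqref{ass1}--\eqref{ass3}, the last being a tail property), extract from the tight-set construction an interval in $E_{N_k}(d_i)$ of length at least $\sum_{n>k}\bigl(s_n-m_n+6-\tfrac4{m_n}\bigr)a_n$, and finish with Theorem~\ref{stan}. Where you genuinely diverge is the final estimate. The paper discards all but the first term of that sum, invokes the recursion \eqref{doublestar} to rewrite $\tfrac{m_k}{2}\cdot\tfrac{a_{k+1}}{a_k}$, and then uses \eqref{ass3} to pass to the subsequence of indices with $m_k\ge4$ before bounding by $\tfrac12$; you instead keep the whole tail, write $r_{N_k}=\sum_{n>k}(s_n+m_n)a_n$ via \eqref{alfA}, and apply Lemma~\ref{delH} exactly as in the generalized Ferens and Marchwicki--Miska cases, reducing everything to the elementary worst-case computation at $s_n=3m_n-4$, where the ratio equals $\tfrac{m_n+2}{2m_n}>\tfrac12$ (your algebra here checks out, as does the monotonicity in $s_n$, since $2m_n-6+\tfrac4{m_n}>0$ for $m_n\ge3$). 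Your route buys a uniform treatment of all three non-multigeometric families by the same lemma, yields the bound $|P_{N_k}|/r_{N_k}\ge\tfrac12$ for \emph{every} $k$ rather than only along the $m_k\ge4$ subsequence, and uses \eqref{ass3} only to know that $E(d_i)$ is a Cantorval in the first place; the paper's version is a cruder one-term truncation but leans directly on the explicit ratio formula \eqref{doublestar}. Both arrive at the same constant $\tfrac12$ and both then conclude by Theorem~\ref{stan}.
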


\begin{proof}
By Theorem~\ref{stan}, it suffices to show that all Kyiv Cantorvals are standard. Let $(d_i)_{i=1}^\infty = \bigl(d_i((s_n),(m_n))\bigr)_{i=1}^\infty$ be a sequence generating a Kyiv Cantorval. For all $k$ with $m_k \ge 4$, we have
$$
E_{N_k}(d_i) = \frac{2a_k}{m_k}E(d_i^{(k)}),
$$
where $(d_i^{(k)})_{i=1}^\infty := \bigl(d_i((s_n)_{n>k},(m_n)_{n>k})\bigr)_{i=1}^\infty$. Thus, by our earlier considerations, $E_{N_k}(d_i)$ contains an interval $P^{(k)}$ of length
$$
|P^{(k)}| = \sum_{n>k}\left(s_n-m_n+6-\frac4{m_n}\right)a_n.
$$
Therefore,
\begin{align*}
\limsup_{s\to\infty}\frac{|P_s|}{r_s}
&\ge \limsup_{k\to\infty} \frac{|P^{(k)}|}{r_{N_k}}
= \limsup_{k\to\infty} \frac{m_k\sum_{n>k}\left(s_n-m_n+6-\tfrac4{m_n}\right)a_n}{2a_k} \\[.1in]
&\ge \limsup_{k\to\infty}\frac{ m_k}2\left(s_{k+1}-m_{k+1}+6-\tfrac4{m_{k+1}}\right)\frac{a_{k+1}}{a_k}\\[.1in]
&\overset{\eqref{doublestar}}{\ge} \limsup_{k\to\infty} \frac{s_{k+1}-m_{k+1}+6-\tfrac4{m_{k+1}}}{s_{k+1}+m_{k+1}+\tfrac2{m_{k+1}}}\overset{\eqref{ass3}}{=} \limsup\limits_{\substack{k\to\infty \\ m_k\ge4}} \frac{s_{k}-m_{k}+6-\tfrac4{m_{k}}}{s_{k}+m_{k}+\tfrac2{m_{k}}} \\[.1in]
&\ge \limsup\limits_{\substack{k\to\infty \\ m_k\ge4}} \frac{s_k-m_k+5}{s_k+m_k+1}
= \limsup\limits_{\substack{k\to\infty \\ m_k\ge4}} \left(1-\frac{2m_k-4}{s_k+m_k+1}\right)\\[.1in]
 &\overset{\eqref{ass1}}{\ge}\ \limsup\limits_{\substack{k\to\infty \\ m_k\ge4}}\left(1-\frac{2m_k-4}{4m_k-3}\right)\ \ge\ \frac12,
\end{align*}
which completes the proof.
\end{proof}

\begin{con}
Every achievable Cantorval is a standard Cantorval, and hence its measure equals the measure of its interior.
\end{con}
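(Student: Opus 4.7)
The plan is a proof by contradiction: assume $E = E(x_n)$ is an achievable Cantorval with $|P_n|/r_n \to 0$. I would first record a structural lemma valid for any convergent series of non-negative terms: the length of the longest component of the $n$-th iteration $I_n$ equals $\Delta_{r_n}F_n + r_n$, and this quantity is monotonically non-increasing in $n$ because $I_{n+1}\subset I_n$; hence $\Delta_{r_n}F_n \to |P_0|$, the length of the longest component of $E$. Applied to each tail series and rescaled by $r_n$, set $\tilde E_n := r_n^{-1}E_n \subset [0,1]$: this is itself an achievable Cantorval containing $0$ and $1$, and the non-standardness hypothesis is precisely that the longest component of $\tilde E_n$ tends to $0$.

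The next step is compactness. By Blaschke's selection theorem, pass to a subsequence along which $\tilde E_{n_k} \to K$ in Hausdorff distance for some compact $K \subset [0,1]$ with $0,1 \in K$. Since each $\tilde E_{n_k}$ has its longest component of length at most $\varepsilon_k \to 0$, the limit $K$ cannot contain an interval of positive length, so $K$ is totally disconnected---either finite or homeomorphic to the classical Cantor set. The strategy is then to contradict this by extracting enough additional structure from the rescaled tail sequences $(x_{n_k+i}/r_{n_k})_{i\ge 1}$ to force an interval inside $K$. The natural refinement is a diagonal extraction: pass to a further subsequence along which these rescaled tails converge coordinatewise to a non-negative sequence $(y_i)$ with $\sum y_i \le 1$. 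A short Hausdorff-limit calculation, using the fact that every partial sum $\sum_{i\in A\cap[1,N]} x_{n_k+i}/r_{n_k}$ lies in $\tilde E_{n_k}$, then shows $E(y_i) \subset K$. If one can verify that $E(y_i)$ still contains an interval, the sought contradiction follows.

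The main obstacle is precisely verifying that $E(y_i)$ contains an interval. Two things can go wrong: either the family $\{\tilde E_{n_k}\}$ loses mass at infinity, so that $\sum y_i = 0$ and $E(y_i) = \{0\}$; or the coordinatewise limit flattens the Kakeya indicators so that $E(y_i)$ degenerates into a finite set or a Cantor set. Salvaging the argument would require a uniformity of the following flavour: a universal positive lower bound on $|P_0|/r_0$ over all achievable Cantorvals normalized to $r_0=1$, or, equivalently, a combinatorial guarantee that every sequence whose achievement set is a Cantorval must contain, infinitely often, a block of reversed Kakeya indices producing a brick long enough relative to the remainder at that level. The three non-multigeometric families treated in this paper each furnish such a block by an explicit arithmetic identity tied to their defining sequences, and isolating a general substitute directly from the bare hypothesis that $E(x_n)$ is a Cantorval appears to be the central new input required. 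This is why the conjecture has remained open, and where the heart of any proof will lie.
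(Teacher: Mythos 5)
This statement is labelled a \emph{Conjecture} in the paper: no proof of it exists there, and the question is open. Your proposal is honest about not closing it, but it is worth separating the gap you concede from a step that is outright invalid. The invalid step is the claim that, because each rescaled tail $\tilde E_{n_k}=r_{n_k}^{-1}E_{n_k}$ has longest component of length $\varepsilon_k\to 0$, the Hausdorff limit $K$ cannot contain an interval. Hausdorff convergence does not transport total disconnectedness to the limit: the finite sets $\{j/k:\ 0\le j\le k\}$ converge to $[0,1]$, and in the same way a sequence of Cantorvals whose components all have length at most $\varepsilon_k\to 0$ can converge in Hausdorff distance to a full interval. Consequently, even if you carried out the diagonal extraction and verified that the limit sequence $(y_i)$ satisfies $E(y_i)\subset K$ with $E(y_i)$ containing an interval, no contradiction would result, because nothing prevents $K$ from containing intervals under your hypotheses. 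The Blaschke compactness scheme therefore attacks the wrong invariant: non-standardness is a statement about the ratio $|P_n|/r_n$, i.e.\ about a lower bound on $\Delta_{r_n}F_n$ \emph{relative to} $r_n$ along a subsequence, and Theorem \ref{charint} (the interval criterion) only yields $\lim_n\Delta_{r_n}F_n>0$ in absolute terms, which gives no information after dividing by $r_n\to 0$. (Your preliminary lemma is essentially correct, though the claim $\Delta_{r_n}F_n\to|P_0|$ needs a small compactness argument, since the longest components of the iterations $I_n$ need not be nested.)

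The gap you do acknowledge --- loss of mass in the rescaled tails, or degeneration of $E(y_i)$ to a finite or Cantor set --- is indeed the heart of the matter: one would need a uniform positive lower bound on the normalized longest-component length valid for \emph{all} achievable Cantorvals, or some combinatorial mechanism extracting, from the bare hypothesis that $E(x_n)$ is a Cantorval, infinitely many scales $n$ at which $E_n$ contains an interval of length comparable to $r_n$. The paper's positive results (Theorems \ref{GFC}, \ref{MMstan}, \ref{Kyiv}) all obtain such a bound from explicit arithmetic identities special to the defining sequences, precisely as you observe. So the proposal is a reasonable reconnaissance of the problem, but it is not a proof, and its one genuinely new deduction (total disconnectedness of the Hausdorff limit) is false.
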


\vspace{.3in}
\section{Remarks on the set of uniqueness}

In this section we discuss the topological properties of the set of subsums with unique representations and slightly extend the results of the primary paper on this topic \cite{GM23}.
For an absolutely convergent series $\sum x_n$, its set of subsums with unique representation $U=U(x_n)$ is defined by
$$
U\ =\ U(x_n)\ :=\ \left\{x\in E(x_n):\ \exists!_{ A\subset \mathbb N}\quad x=\sum_{n\in A}x_n\,\right\}.
$$
The complement
$$
U^c=\Bigl\{x\in E:\ \exists_{A,B\subset\mathbb N},\ A\ne B,\ \ x=\sum_{n\in A}x_n=\sum_{n\in B}x_n\Bigr\}
$$
is the set of subsums of $\sum x_n$ with multiple representations.
Since $E(x_n)=\sum_{n:\,x_n<0}x_n+E(|x_n|)$, it is enough to consider convergent series with positive, monotone terms.

\begin{obs}
\label{alien}
$\sum_{n\in A}x_n\in U$ if and only if $E((x_n)_{n\in A})\ \cap\ E((x_n)_{n\in\mathbb N\setminus A})\ =\ \{0\}$.
\end{obs}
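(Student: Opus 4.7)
The plan is to carry out both directions by straightforward index-set manipulations, exploiting the positivity of the terms. The guiding idea is that two distinct representations of the same subsum, after the common indices are cancelled, produce a value that lies simultaneously in the achievement set of $(x_n)_{n\in A}$ and in that of $(x_n)_{n\in\mathbb N\setminus A}$.

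For the forward implication, I would assume $y=\sum_{n\in A}x_n\in U$ and argue by contradiction: suppose there is a nonzero $z$ in $E((x_n)_{n\in A})\cap E((x_n)_{n\in\mathbb N\setminus A})$. Then $z=\sum_{n\in A_1}x_n=\sum_{n\in A_2}x_n$ for some nonempty $A_1\subset A$ and nonempty $A_2\subset\mathbb N\setminus A$. Swapping the $A_1$-subsum inside the $A$-representation of $y$ for the $A_2$-subsum gives
$$
y\ =\ \sum_{n\in(A\setminus A_1)\cup A_2}x_n,
$$
and since $A_2\cap A=\emptyset$ while $A_1\ne\emptyset$, the new index set differs from $A$, contradicting $y\in U$.

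For the converse, I would fix any $B\subset\mathbb N$ with $\sum_{n\in B}x_n=\sum_{n\in A}x_n$ and subtract the $(A\cap B)$-subsum from both sides, which is legitimate by absolute convergence. This yields
$$
\sum_{n\in A\setminus B}x_n\ =\ \sum_{n\in B\setminus A}x_n.
$$
The left-hand side lies in $E((x_n)_{n\in A})$, the right-hand side in $E((x_n)_{n\in\mathbb N\setminus A})$, so by hypothesis both are $0$. Positivity then forces $A\setminus B=B\setminus A=\emptyset$, i.e.\ $A=B$, which is exactly the uniqueness required for $y\in U$.

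I do not anticipate a genuine obstacle: the argument is essentially set algebra together with one appeal to absolute convergence to legitimize the cancellation, and one appeal to positivity to upgrade \emph{common value equals} $0$ into \emph{the symmetric difference is empty}. The only point that deserves an explicit sentence in the final write-up is this second appeal, because without the standing assumption $x_n>0$ the observation would already fail at the level of a single pair of opposite-signed terms.
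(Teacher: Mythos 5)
Your proposal is correct and follows essentially the same route as the paper's own proof: swapping an $A$-block for an $A^c$-block to contradict uniqueness in one direction, and cancelling the common indices to land the value $\sum_{n\in A\setminus B}x_n=\sum_{n\in B\setminus A}x_n$ in the intersection in the other. Your handling of the converse is, if anything, slightly more careful than the paper's, since you use positivity to force both $A\setminus B$ and $B\setminus A$ to be empty rather than asserting outright that $B\setminus A$ is nonempty.
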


\begin{proof}
($\Rightarrow$) If $\sum_{n\in A}x_n\in U$, then in particular $\sum_{n\in A}x_n\ne \sum_{n\in A^c}x_n$. Suppose there exists a positive $x\in E((x_n)_{n\in A})\cap E((x_n)_{n\in A^c})$. Then there are non-empty subsets $I_1\subset A$ and $I_2\subset A^c$ such that $x=\sum_{n\in I_1}x_n=\sum_{n\in I_2}x_n$, which implies
$$
\sum_{n\in A}x_n = \sum_{n\in I_2\cup(A\setminus I_1)}x_n,
$$
contradicting uniqueness.

($\Leftarrow$) Conversely, suppose that $E((x_n)_{n\in A})\ \cap\ E((x_n)_{n\in\mathbb N\setminus A})\ =\ \{0\}$ and $\sum_{n\in A}x_n=\sum_{n\in B}x_n$ for some $B\ne A$. Then
$$
\sum_{n\in A\setminus B}x_n = \sum_{n\in B\setminus A}x_n,
$$
which contradicts the assumption, since $B\setminus A$ is a non-empty subset of $A^c$.
\end{proof}

In particular, if $r_k<x_k$, then the sum $r_k$ is uniquely represented.

\begin{obs}
\label{obs2}
$x\in U^c$ if and only if there exists $k\in\mathbb N$ and distinct $f,g\in F_k$ such that
$$
x\in(f+E_k)\cap(g+E_k).
$$
\end{obs}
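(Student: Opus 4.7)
The plan is to translate the notion of multiple representations of $x$ into the canonical decomposition $E = F_k + E_k$. For any $A \subset \mathbb N$ and any $k \in \mathbb N$, the splitting $A = (A \cap \{1,\ldots,k\}) \cup (A \cap \{n>k\})$ expresses $\sum_{n \in A} x_n$ as $f + e$ with $f \in F_k$ and $e \in E_k$, and conversely every such decomposition arises from splitting some $A$ at level $k$.

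The reverse implication is then essentially bookkeeping. Given $x = f + e_1 = g + e_2$ with $f \neq g$ in $F_k$ and $e_1, e_2 \in E_k$, I would pick witnessing subsets $A_1, A_2 \subset \{1,\ldots,k\}$ with $\sum_{n\in A_i} x_n$ equal to $f$ and $g$ respectively, and $B_1, B_2 \subset \{n>k\}$ for $e_1, e_2$. Equality $A_1 = A_2$ would force $f = g$; hence $A_1 \cup B_1$ and $A_2 \cup B_2$ are distinct subsets of $\mathbb N$ both summing to $x$, so $x \in U^c$.

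For the forward implication, suppose $x = \sum_{n\in A} x_n = \sum_{n \in B} x_n$ with $A \neq B$. Here is the only point that needs thought: one cannot truncate $A$ and $B$ at an arbitrary $k$, because the prefix sums $\sum_{n\in A,\,n\leq k} x_n$ and $\sum_{n\in B,\,n\leq k} x_n$ may coincide as elements of $F_k$ even when the underlying subsets of $\{1,\ldots,k\}$ differ, since the map sending a prefix subset to its sum is not in general injective. The remedy is to set $k := \min(A \triangle B)$. Then $A \cap \{1,\ldots,k-1\} = B \cap \{1,\ldots,k-1\}$ and exactly one of $A, B$ contains the index $k$, so the prefix sums $f, g \in F_k$ differ by $\pm x_k \neq 0$ (using positivity of the terms). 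The two tails lie in $E_k$, so $x \in (f+E_k) \cap (g+E_k)$ with $f \neq g$, as required. The whole argument is routine once this minimal choice of $k$ is made.
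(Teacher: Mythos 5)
Your proof is correct. The paper omits the proof of this observation as elementary, and your argument is exactly the intended one: split each representing set $A$ at level $k$ to get the decomposition $E=F_k+E_k$, and in the forward direction choose $k=\min(A\triangle B)$ so that the two prefix sums differ by $x_k>0$ (positivity being the standing assumption in this section), which guarantees $f\neq g$. The point you flag --- that an arbitrary truncation level may produce equal prefix sums even for distinct prefix sets --- is precisely the one detail worth making explicit, and your minimal-index choice handles it.
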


We omit the elementary proof of the observation.

By $A^d$ we denote the set of all accumulation points of the set $A$.
\begin{lem}
\label{fourcond}
The following are equivalent:
\begin{itemize}
\item[(i)] $\overline{U^c}=E$;
\item[(ii)] $\mathrm{int}_E U=\emptyset$;
\item[(iii)] $0\in (U^c)^d$;
\item[(iv)] $\forall_{k\in\mathbb N},\ E_k\cap U^c\ne\emptyset$.
\end{itemize}
\end{lem}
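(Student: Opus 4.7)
The plan is to establish $(\text{i}) \Leftrightarrow (\text{ii})$ topologically, $(\text{iii}) \Leftrightarrow (\text{iv})$ from positivity and monotonicity of $(x_n)$, and then close the loop with $(\text{i}) \Rightarrow (\text{iii})$ and $(\text{iii}) \Rightarrow (\text{i})$. All arguments rely only on $x_n > 0$ and the monotonicity of $(x_n)$; the main content lies in the final step.

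For the easy equivalences: $(\text{i}) \Leftrightarrow (\text{ii})$ is the tautology that a subset of $E$ has empty relative interior iff its relative complement in $E$ is dense. For $(\text{iii}) \Leftrightarrow (\text{iv})$, I would use the key observation that any $z \in U^c$ with $z < x_k$ has all of its representations supported on $\{k+1, k+2, \ldots\}$: if some $x_j$ with $j \le k$ appeared in a representation, then by monotonicity $z \ge x_j \ge x_k$, contradicting $z < x_k$. Consequently such a $z$ lies in $E_k$, which delivers $(\text{iii}) \Rightarrow (\text{iv})$; conversely, any $z_k \in E_k \cap U^c$ satisfies $0 < z_k \le r_k \to 0$, yielding $(\text{iv}) \Rightarrow (\text{iii})$.

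$(\text{i}) \Rightarrow (\text{iii})$ is short: positivity of $x_n$ makes the empty set the unique representation of $0$, so $0 \in U$ and $0 \notin U^c$; combined with $\overline{U^c} = E \ni 0$, this forces $0 \in (U^c)^d$. For $(\text{iii}) \Rightarrow (\text{i})$, given $y = \sum_{n \in A} x_n \in E$ and $\varepsilon > 0$, I would pick $k$ with $r_k < \varepsilon$ and, using the monotonicity trick above, produce $z \in U^c$ with $z < x_k$ admitting two distinct representations $B_1, B_2 \subset \{k+1, k+2, \ldots\}$. Setting $A_k := A \cap \{1, \ldots, k\}$ and $y' := \sum_{n \in A_k} x_n + z$, the sets $A_k \cup B_1$ and $A_k \cup B_2$ are automatically disjoint and still distinct, so $y' \in U^c$; the estimate $|y - y'| = \bigl|\sum_{n \in A \setminus A_k} x_n - z\bigr| \le r_k < \varepsilon$ then gives $y \in \overline{U^c}$.

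The main obstacle --- really the only step with content --- is the construction in $(\text{iii}) \Rightarrow (\text{i})$: one must show that $U^c$ is dense in \emph{all} of $E$, not merely near $0$. The trick is to insist the auxiliary $z \in U^c$ be so small that monotonicity confines every representation of $z$ to a tail $\{k+1, \ldots\}$; this allows prepending a common head $A_k$ to two distinct tail representations of $z$ while preserving both their distinctness (from $B_1 \ne B_2$) and set-theoretic disjointness.
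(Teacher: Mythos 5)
Your proof is correct, and it takes a somewhat different route from the paper's. The core combinatorial device is the same in both: monotonicity confines every representation of a small element of $U^c$ to the tail $\{k+1,k+2,\ldots\}$, and one then glues a common head onto two distinct tail representations. But the paper closes the cycle as (ii)$\Rightarrow$(iii)$\Rightarrow$(iv)$\Rightarrow$(ii), with the hard step (iv)$\Rightarrow$(ii) argued by contradiction: assuming a relatively open piece of $E$ lies in $U$, it uses the density of $\bigcup_k F_k$ in $E$ to choose a point $f$ with a unique finite representation and shows that $f+E_k\subset U$ forces $E_k\subset U$. You instead prove (iii)$\Rightarrow$(i) directly: given $y=\sum_{n\in A}x_n$ and $\varepsilon>0$, you truncate $A$ at $k$ and add a doubly representable $z\in U^c\cap(0,x_k)$, producing a point of $U^c$ within $r_k<\varepsilon$ of $y$. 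This avoids both the density of the finite subsums and the need for a uniquely represented $f$, and it yields the density of $U^c$ at every point of $E$ constructively rather than via contradiction; you also prove (iv)$\Rightarrow$(iii) directly instead of the paper's (ii)$\Rightarrow$(iii) via perfectness of $E$. Two small presentational points: the estimate $|y-y'|\le r_k$ tacitly uses $z\le r_k$, which follows from $z\in E_k$ (you establish this in the (iii)$\Leftrightarrow$(iv) paragraph, but it should be invoked explicitly at the estimate), and the phrase ``$A_k\cup B_1$ and $A_k\cup B_2$ are automatically disjoint'' should say that $A_k$ is disjoint from each $B_i$, which is what makes the two unions distinct representations of $y'$.
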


\begin{proof}
(i)$\Leftrightarrow$(ii) is immediate from topology.

(ii)$\Rightarrow$(iii): If $E$ is finite then $0\in\mathrm{int}_E U$, contradicting (ii). Thus $(x_n)$ must be infinite, and $E$ is a non-empty perfect set. Hence every neighbourhood of $0$ in $E$ contains uncountably many points, at least one of which must lie in $U^c$. Thus, $0\in (U^c)^d$.

(iii)$\Rightarrow$(iv): Given $k$, by (iii) there exists $x\in U^c\cap(0,x_k)$. Since $x<x_k$, we must have $x\in E_k$.

(iv)$\Rightarrow$(ii): Suppose instead that there exists an interval $(\alpha,\beta)$ such that  $\emptyset\ne(\alpha,\beta)\cap E\subset U$. Since $\bigcup_kF_k$ is dense in $E$, we can find $f\in E\cap(\alpha,\beta)$ with the unique and finite representation $f=\sum_{n\in C}x_n$. Let $m=\max C$, and choose $k$ with $r_k<\min\{x_m,\beta-f\}$. Then $f+E_k \subset (\alpha, \beta)\cap E \subset U $, so also $E_k \subset U$, and so $E_k\cap U^c=\emptyset$, contradicting (iv).
\end{proof}

\begin{lem}
\label{sierp}
 Let $\{Z_i\}$ be a countable family of closed proper subsets of $[a,b]$ such that $\bigcup_i Z_i=[a,b]$. Then there exist $i\ne j$ with $Z_i\ne Z_j$ and
$$
Z_i\cap Z_j\cap (a,b)\ne\emptyset.
$$
\end{lem}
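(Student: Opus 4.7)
The plan is to argue by contradiction and reduce the claim to Sierpi\'{n}ski's classical theorem that a nondegenerate continuum cannot be expressed as a countable disjoint union of two or more nonempty closed sets. First, I would dispose of duplicates by grouping the family according to equality, passing to the countable subfamily $\{W_\alpha\}$ of pairwise distinct closed proper subsets of $[a,b]$ whose union is still $[a,b]$. Negating the conclusion of the lemma then translates into $W_\alpha\cap W_\beta\cap(a,b)=\emptyset$ for all $\alpha\ne\beta$. Since no single proper closed subset can cover $[a,b]$, at least two such classes must be present.

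Next I would localize inside $(a,b)$. Given any closed subinterval $[c,d]\subset(a,b)$, the sets $\{W_\alpha\cap[c,d]\}_\alpha$ form a countable family of pairwise disjoint closed subsets of $[c,d]$ whose union is $[c,d]$; pairwise disjointness relies crucially on the inclusion $[c,d]\subset(a,b)$. Sierpi\'{n}ski's theorem, applied to the continuum $[c,d]$, forces all but one of these pieces to be empty, so $[c,d]\subset W_{\alpha(c,d)}$ for a uniquely determined index $\alpha(c,d)$. For any two such subintervals, enclosing them in a larger closed subinterval of $(a,b)$ and repeating the argument shows that $\alpha(c,d)$ does not actually depend on $[c,d]$. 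Consequently a single index $\alpha_0$ satisfies $(a,b)\subset W_{\alpha_0}$, and taking closures yields $[a,b]\subset W_{\alpha_0}$, contradicting the properness of $W_{\alpha_0}$.

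The only nontrivial ingredient is Sierpi\'{n}ski's theorem, and everything else is routine bookkeeping. The main delicate point, which motivates working with subintervals of $(a,b)$ rather than with $[a,b]$ itself, is that the hypothesis $Z_i\cap Z_j\cap(a,b)=\emptyset$ controls intersections only in the open interval; the endpoints could in principle belong to many of the $Z_i$ at once, which would block a direct application of Sierpi\'{n}ski to $[a,b]$. Restricting attention to closed subintervals strictly interior to $(a,b)$ avoids this issue, and the connectedness of such subintervals does the rest of the work.
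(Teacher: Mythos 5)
Your proposal is correct, but it takes a different route from the paper. The paper proves the statement from scratch: assuming no two distinct $Z_i$ meet in $(a,b)$, it inductively constructs a nested sequence of closed intervals $[\alpha_{n+1},\beta_{n+1}]\subset(\alpha_n,\beta_n)\subset(a,b)$ with $(\alpha_n,\beta_n)$ disjoint from $Z_1\cup\dots\cup Z_{k_n}$ for an increasing sequence $(k_n)$, so that a point of $\bigcap_n[\alpha_n,\beta_n]$ escapes every $Z_i$ --- in effect reproving, in this localized setting, the Sierpi\'nski-type fact you invoke. You instead black-box Sierpi\'nski's classical theorem that a nondegenerate continuum is not a disjoint union of countably many (at least two) nonempty closed sets, and handle the endpoint issue (intersections are only forbidden in the open interval) by localizing to closed subintervals $[c,d]\subset(a,b)$: there the traces $W_\alpha\cap[c,d]$ are genuinely pairwise disjoint, Sierpi\'nski forces $[c,d]\subset W_{\alpha(c,d)}$, uniqueness of the index follows from disjointness inside $(a,b)$, enclosing two subintervals in a larger one shows the index is constant, and then $(a,b)\subset W_{\alpha_0}$, hence $[a,b]=\overline{(a,b)}\subset W_{\alpha_0}$, contradicting properness. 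Your preliminary reduction to pairwise distinct representatives and the remark that at least two classes occur are both sound. What each approach buys: yours is shorter and conceptually transparent once Sierpi\'nski's theorem is granted, and it isolates cleanly why only the open interval matters; the paper's argument is self-contained and elementary (essentially a Baire/Sierpi\'nski-style nested-interval construction), which keeps the note free of an external citation but at the cost of redoing that construction by hand.
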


\begin{proof}
We consider only the case where the family $\{Z_i\}$ is infinite and $Z_i\ne Z_j$ whenever $i\ne j$. Suppose the claim does not hold, that is, no two elements of $\{Z_i\}$ intersect in the interior of $[a,b]$. Without loss of generality, we may assume that the family is indexed by the positive integers, $\{Z_i\}=\{Z_i:\ i\in\mathbb N\}$.

Let $l_1$ be the smallest index such that $Z_{l_1}\cap(a,b)\ne\emptyset$, and let $k_1$ be the smallest index greater than $l_1$ with $Z_{k_1}\cap(a,b)\ne\emptyset$. Then there exists an open interval $(\alpha_1,\beta_1)$ such that, for $D_1:=\bigcup_{i=1}^{k_1}Z_i$, we have $\alpha_1,\beta_1\in D_1$, $(\alpha_1,\beta_1)\cap D_1=\emptyset$, and $[\alpha_1,\beta_1]\subset(a,b)$.

Next, let $l_2:=\min\{i>k_1:\ Z_i\cap(\alpha_1,\beta_1)\ne\emptyset\}$ and $k_2:=\min\{i>l_2:\ Z_i\cap(\alpha_1,\beta_1)\ne\emptyset\}$. Define $D_2:=\bigcup_{i=1}^{k_2}Z_i$. Since the sets $Z_i$ are closed and do not intersect each other in the interior of $[a,b]$, there exist $\alpha_2,\beta_2\in D_2\setminus D_1$ such that $(\alpha_2,\beta_2)\cap D_2=\emptyset$ and $[\alpha_2,\beta_2]\subset(\alpha_1,\beta_1)$.

Proceeding inductively, we construct an increasing sequence $(k_n)$ of positive integers and a decreasing sequence $\bigl((\alpha_n,\beta_n)\bigr)_{n\in\mathbb N}$ of open intervals such that $[\alpha_{n+1},\beta_{n+1}]\subset(\alpha_n,\beta_n)$ and $(\alpha_n,\beta_n)\cap\bigcup_{i=1}^{k_n}Z_i=\emptyset$ for all $n$. The set $\bigcap_n[\alpha_n,\beta_n]$ is then a non-empty subset of $[a,b]\setminus\bigcup_iZ_i$, contradicting the assumption that $\bigcup_iZ_i=[a,b]$.

\end{proof}

\begin{cor}
\label{cor5}
If $E$ contains an interval, then $U^c$ is dense in $E$.
\end{cor}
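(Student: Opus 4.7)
My plan is to verify condition (iv) of Lemma~\ref{fourcond}, namely $E_k\cap U^c\ne\emptyset$ for every $k\in\N$; by that lemma this is equivalent to $\overline{U^c}=E$, which is exactly the desired density.

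Fix $k\in\N$. Since $E_k$ has the same topological type as $E$ (noted in the introduction) and $E$ contains an interval, $E_k$ contains a nondegenerate closed subinterval $[c,d]$. For any $l>k$, splitting each subsum of $(x_n)_{n>k}$ into a subsum over $\{k+1,\ldots,l\}$ plus a subsum over $\{l+1,l+2,\ldots\}$ gives the decomposition $E_k=G+E_l$, where
\[
G\ :=\ \Bigl\{\sum_{n\in A}x_n:\ A\subset\{k+1,\ldots,l\}\Bigr\}
\]
is a finite set. I choose $l$ so large that $r_l<d-c$. Then each $Z_f:=[c,d]\cap(f+E_l)$ is closed in $[c,d]$ of diameter at most $r_l<d-c$, hence a \emph{proper} subset of $[c,d]$, and the family $\{Z_f: f\in G\}$ covers $[c,d]$. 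Lemma~\ref{sierp} therefore yields distinct $f,g\in G$ with $Z_f\cap Z_g\cap(c,d)\ne\emptyset$. Any $y$ in this intersection lies in $E_k\cap(f+E_l)\cap(g+E_l)$ with $f\ne g$. Since $G\subset F_l$, Observation~\ref{obs2} (applied with its index taken to be $l$) gives $y\in U^c$, so $y\in E_k\cap U^c$, finishing (iv).

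The main obstacle I anticipate is ensuring the hypothesis of Lemma~\ref{sierp} is satisfied—that \emph{every} $Z_f$ is a proper subset of $[c,d]$. This is precisely the reason for choosing $l$ with $r_l<d-c$: once $\mathrm{diam}(f+E_l)\le r_l<d-c$, no single translate $f+E_l$ can swallow the whole subinterval, so the Sierpiński-style overlap is forced. Everything else—the finite covering of $E_k$ by translates of $E_l$, the inclusion $G\subset F_l$, and the passage from set-theoretic overlap to multi-representation via Observation~\ref{obs2}—is routine bookkeeping.
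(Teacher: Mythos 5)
Your proof is correct, and its core engine is the same as the paper's: cover a short closed subinterval of the achievement set by the finitely many translates of a tail set $E_l$ with $r_l$ smaller than the interval's length, apply Lemma~\ref{sierp} to force two distinct translates to overlap, and convert the overlap into a point of $U^c$ via Observation~\ref{obs2}. Where you genuinely diverge is in how density in all of $E$ is extracted. The paper runs the overlap argument inside an interval $[\alpha,\beta]\subset E$ itself, with the covering family $\{(f+E_k)\cap[\alpha,\beta]:f\in F_k\}$, concludes that $U^c$ meets every open subinterval, and then passes from density in $\mathrm{int}\,E$ to density in $E$ using that $E$ is regularly closed ($E=\overline{\mathrm{int}\,E}$). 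You instead verify condition (iv) of Lemma~\ref{fourcond} for every $k$ — running the overlap argument inside an interval of $E_k$, via the decomposition $E_k=G+E_l$ with $G\subset F_l$ — and let the lemma's equivalence (iv)$\Rightarrow$(i) deliver $\overline{U^c}=E$. Each route leans on an auxiliary fact the paper states without proof: yours on the invariance of the topological type under passing to the remainder sets $E_k$ (so that $E_k$ still contains an interval), the paper's on regular closedness of $E$. Your version meshes naturally with Lemma~\ref{fourcond}, which the paper invokes anyway in Corollary~\ref{cor6}, while the paper's version stays entirely inside $E$ and yields the slightly stronger local statement that $U^c$ is dense in every $E$-interval. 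One small point worth making explicit in your write-up: Lemma~\ref{sierp} concludes $Z_f\ne Z_g$ for the chosen indices, which is what guarantees $f\ne g$ as real numbers, exactly what Observation~\ref{obs2} needs.
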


\begin{proof}
Suppose $[\alpha,\beta]\subset E$ and choose $k$ with $r_k<\beta-\alpha$. Applying Lemma \ref{sierp} to $\{(f+E_k)\cap[\alpha,\beta]: f\in F_k\}$, Observation~\ref{obs2} yields a point $x\in E\cap U^c$. Thus, every open subinterval of $[\alpha,\beta]$ meets $U^c$, so
 $U^c\cap \mathrm{int}\,E$ is dense in $\mathrm{int}\,E$.
Since $E$ is regularly closed, $E=\overline{\mathrm{int}\,E}\subset\overline{U^c}\subset E$ and the conclusion follows.
\end{proof}

\begin{cor}
\label{cor6}
$\mathrm{int}\,U=\emptyset$ always.
\end{cor}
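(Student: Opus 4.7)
The plan is to deduce this quickly from Corollary~\ref{cor5} by contradiction. Suppose $\textnormal{int}\,U \ne \emptyset$. Then $U$ contains a non-degenerate open interval $(a,b)\subset\mathbb R$. Since $U\subset E$ by definition, we have $(a,b)\subset E$, and in particular $E$ contains an interval.

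At this point Corollary~\ref{cor5} applies directly: $U^c$ is dense in $E$. Since $(a,b)$ is a non-empty open subset of $E$ (it sits inside $E$ and is open in $\mathbb R$, hence open in $E$), density forces $U^c \cap (a,b)\ne\emptyset$. But $(a,b)\subset U$ means $U^c\cap(a,b)=\emptyset$, a contradiction. Hence $\textnormal{int}\,U = \emptyset$.

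There is really no obstacle here; the only subtlety is to distinguish $\textnormal{int}\,U$ (interior in $\mathbb R$) from $\textnormal{int}_E U$ that appeared in Lemma~\ref{fourcond}. The argument works precisely because an interior point of $U$ in $\mathbb R$ automatically produces an interval inside $E$, which is exactly the hypothesis of Corollary~\ref{cor5}. Thus the proof is a one-line reduction, and no case analysis (Cantor set, Cantorval, multi-interval, finite set) is needed, since if $E$ itself contains no interval then $\textnormal{int}\,U\subset\textnormal{int}\,E=\emptyset$ is automatic.
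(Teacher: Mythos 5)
Your proof is correct and takes essentially the same route as the paper: both rest on Corollary~\ref{cor5}. You merely streamline the argument by noting that an interior point of $U$ in $\mathbb{R}$ already forces $E$ to contain an interval, which lets you skip the paper's explicit Guthrie--Nymann case split and its appeal to Lemma~\ref{fourcond}, deriving the contradiction directly from the density of $U^c$ in $E$.
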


\begin{proof}
By the Guthrie–Nymann Classification Theorem, $E$ is either nowhere dense or contains an interval. In the first case, $\mathrm{int}\,U\subset \mathrm{int}\,E=\emptyset$. In the second case, Corollary~\ref{cor5} and Lemma~\ref{fourcond} yield $\mathrm{int}_E U=\emptyset$.
\end{proof}

\begin{thm}
$U$ is always a $\mathcal{G}_\delta$ set.
\end{thm}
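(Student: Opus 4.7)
The plan is to read off the statement directly from Observation~\ref{obs2}. That observation gives the set-theoretic identity
$$
U^c\ =\ \bigcup_{k=1}^\infty\ \bigcup_{\substack{f,g\in F_k\\ f\neq g}} \bigl((f+E_k)\cap(g+E_k)\bigr),
$$
and the right-hand side has a very favourable Borel structure that I would exploit immediately.

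First, I would observe that for each fixed $k$ the inner union is finite, because $F_k$ has at most $2^k$ elements. Moreover, by Kakeya's theorem (recalled in the introduction) each $E_k$ is compact, so each translate $f+E_k$ and each intersection $(f+E_k)\cap(g+E_k)$ is compact; a finite union of compact sets is compact, and hence closed in $\mathbb{R}$. Consequently $U^c$ is a countable union of compact sets, in particular an $\mathcal{F}_\sigma$ subset of $\mathbb{R}$.

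Second, I would pass to the complement inside $E$. Since $E$ itself is closed in $\mathbb{R}$ (hence $\mathcal{G}_\delta$), and
$$
U\ =\ E\setminus U^c\ =\ E\,\cap\,(\mathbb{R}\setminus U^c),
$$
the set $U$ is the intersection of a closed set with the complement of an $\mathcal{F}_\sigma$ set, i.e.\ the intersection of two $\mathcal{G}_\delta$ sets, which is again $\mathcal{G}_\delta$. This completes the proof.

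There is no real obstacle: the entire argument is a one-line consequence of Observation~\ref{obs2} together with the finiteness of $F_k$ and the compactness of $E_k$. The only point that requires a moment of care is bookkeeping the difference between the relative complement $E\setminus U$ (as used in the paper's definition of $U^c$) and the absolute complement in $\mathbb{R}$; but since $E$ is closed, both viewpoints yield the same conclusion.
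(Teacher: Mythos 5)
Your proposal is correct and follows essentially the same route as the paper: both write $U^c=\bigcup_k S_k$ with $S_k$ the finite union over distinct $f,g\in F_k$ of the closed sets $(f+E_k)\cap(g+E_k)$ via Observation~\ref{obs2}, conclude that $U^c$ is $\mathcal{F}_\sigma$, and then use the closedness of $E$ to deduce that $U=E\setminus U^c$ is $\mathcal{G}_\delta$. The only cosmetic difference is that you justify closedness of each $S_k$ via compactness of $E_k$ and finiteness of $F_k$, which the paper leaves implicit.
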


\begin{proof}
Since $E$ is closed, it suffices to show $U^c$ is $\mathcal{F}_\sigma$. From Observation~\ref{obs2},
$$
U^c=\bigcup_{k=1}^\infty S_k, \qquad S_k:=\bigcup_{\substack{f,g\in F_k \\ f\ne g}}(f+E_k)\cap(g+E_k).
$$
Each $S_k$ is closed, and $(S_k)$ is increasing, so $U^c$ is $\mathcal{F}_\sigma$.
\end{proof}

\begin{cor}
\label{cor8}
If $U$ is dense in $E$, then $U^c$ is meager in $E$.
\end{cor}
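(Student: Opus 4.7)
The plan is to piggy-back directly on the $\mathcal{F}_\sigma$-decomposition produced in the preceding theorem. There we wrote
\[
U^c=\bigcup_{k=1}^\infty S_k, \qquad S_k=\bigcup_{\substack{f,g\in F_k\\ f\ne g}}(f+E_k)\cap(g+E_k),
\]
with each $S_k$ closed in $\mathbb R$ (hence also closed in $E$). Meagerness in $E$ is a countable condition, so it will be enough to show that each individual $S_k$ is nowhere dense in $E$, and then assemble the conclusion from the countable union.

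Since $S_k$ is already closed in $E$, nowhere-density reduces to verifying that $S_k$ has empty interior in $E$, i.e.\ that the relative complement $E\setminus S_k$ is dense in $E$. Here the hypothesis enters cleanly: as $S_k\subset U^c$, we have $U\subset E\setminus S_k$, so if $U$ is dense in $E$ then so is $E\setminus S_k$. Hence each $S_k$ is nowhere dense in $E$, and the countable union $U^c=\bigcup_k S_k$ is meager in $E$, which is exactly the claim.

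There is essentially no obstacle; the corollary is a one-line consequence of (a) the $\mathcal{F}_\sigma$ structure of $U^c$ already established, (b) the triviality that a closed subset of $U^c$ has relative interior contained in $E\setminus U$, and (c) the definition of density. I would simply present the argument in the three lines above, with a short sentence reminding the reader that $S_k$ being closed in $\mathbb R$ makes it closed in the subspace topology of $E$, so that the passage from ``$U$ dense'' to ``$S_k$ nowhere dense in $E$'' does not require $S_k$ itself to be regularly closed or anything of the sort.
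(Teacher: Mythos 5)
Your argument is correct and is exactly the intended one: the paper states this as an immediate corollary of the preceding theorem, relying on the same decomposition $U^c=\bigcup_k S_k$ into closed sets, each of which has empty interior in $E$ because its relative complement contains the dense set $U$. Nothing is missing; your remark that closedness of $S_k$ in $\mathbb{R}$ gives closedness in the subspace $E$ is all the care that is needed.
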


Now, we show a new sufficient condition for $E(x_n)$ to be a Cantor set, extending the well-known fact that if $E(x_n)=U(x_n)$, then $E(x_n)$ is Cantor (cf.~\cite[p.~3]{MM}).
Let $y_i>0$, $K_i\in\mathbb N$. The sequence
$$
(y_i; K_i):=(\underbrace{y_1,\ldots,y_1}_{K_1\ \text{times}},\,\underbrace{y_2,\ldots,y_2}_{K_2\ \text{times}},\,\ldots)
$$
denotes repetition of each $y_i$ exactly $K_i$ times.

\begin{thm}
\label{warCan}
Let $(x_n)=(y_i;K_i)$ with $\sum x_n<\infty$. If for each $x\in E(x_n)$ there exists exactly one sequence $(n_i)\in S:=\prod_{j=1}^\infty\{0,1,\ldots,K_j\}$ such that
$$
x=\sum_{i=1}^\infty n_i y_i,
$$
then $E(x_n)$ is a Cantor set.
\end{thm}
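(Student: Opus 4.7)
The plan is to realize $E(x_n)$ as a continuous injective image of a Cantor space and then upgrade the resulting bijection to a homeomorphism via compactness. Set
$$
S \ := \ \prod_{j=1}^\infty \{0,1,\ldots,K_j\},
$$
equipped with the product topology coming from the discrete topology on each factor. Since $(x_n)$ is an infinite sequence and each $K_j\ge 1$, the space $S$ is a countable product of non-trivial finite discrete spaces, hence compact, metrizable, perfect, and totally disconnected---that is, a Cantor space.

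Next, I would introduce the representation map $\phi:S\to E(x_n)$ defined by $\phi\bigl((n_i)\bigr):=\sum_{i=1}^\infty n_i y_i$. The series converges uniformly on $S$ because $\sum_i K_i y_i=\sum_n x_n<\infty$, and from this one also gets continuity of $\phi$ via a standard tail estimate: given $\varepsilon>0$, choose $N$ with $\sum_{i>N}K_iy_i<\varepsilon$, so that any sequence agreeing with $(n_i)$ in the first $N$ coordinates is mapped into the $\varepsilon$-ball around $\phi\bigl((n_i)\bigr)$. Surjectivity onto $E(x_n)$ is the routine observation that any representation $x=\sum_{n\in A}x_n$ collapses to $\sum_i |A\cap I_i|\,y_i$, where $I_i$ denotes the block of indices carrying the value $y_i$; thus $\bigl(|A\cap I_i|\bigr)_i\in S$ maps to $x$. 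Injectivity of $\phi$ is exactly the hypothesis of the theorem.

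Finally, a continuous bijection from a compact space onto a Hausdorff space is a homeomorphism, so $E(x_n)$ is homeomorphic to the Cantor space $S$ and is therefore itself a Cantor set. I do not anticipate any real obstacle; the only point requiring a brief check is that the data provide infinitely many distinct values $y_i$, so that $S$ is genuinely a Cantor space rather than a finite set. This is automatic because $(x_n)$ is an infinite sequence with each $K_i$ finite, so only finitely many blocks would produce only finitely many positive terms, contradicting the setup of the theorem.
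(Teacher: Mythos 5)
Your proof is correct and essentially the same as the paper's: both identify $E(x_n)$ with the compact, perfect, totally disconnected space $S=\prod_j\{0,1,\ldots,K_j\}$ via the representation bijection and use compactness to upgrade it to a homeomorphism. The only difference is the direction of the map (you work with $\phi:S\to E(x_n)$ and compactness of $S$, the paper with $T:E(x_n)\to S$ and compactness of $E(x_n)$), which is immaterial; if anything, your direction makes the continuity check via the tail estimate slightly more transparent.
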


\begin{proof}
With the product topology, $S$ is homeomorphic to $\{0,1\}^\mathbb N$, and hence to the Cantor set. The map $T:E(x_n)\to S$ defined by $T(x)=(n_i)$, where $x=\sum n_i y_i$, is a continuous bijection onto $S$. Since $E(x_n)$ is compact, $T$ is a homeomorphism, so $E(x_n)$ is Cantor.
\end{proof}

This theorem provides a quick proof of a known fact  \cite[Thm.~16]{BFPW2}.
\begin{cor}
\label{semifast}
The achievement set of any semi-fast convergent series is a Cantor set.
\end{cor}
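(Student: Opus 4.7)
The plan is to deduce the corollary from Theorem \ref{warCan}. I first rewrite the semi-fast series $\sum x_n$ in its canonical grouped form $(x_n)=(y_i;\,K_i)$, where $(y_i)$ is the strictly decreasing sequence of distinct values taken by $(x_n)$ and $K_i$ is the multiplicity of $y_i$. The defining property of semi-fast convergence translates, after this grouping, into the strict Kakeya-type domination at the block level,
$$
y_i \;>\; R_i \;:=\; \sum_{j>i} K_j\, y_j,
$$
for every $i$.

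It then suffices, by Theorem \ref{warCan}, to check that each $x\in E(x_n)$ admits a \emph{unique} representation of the form $x=\sum_i n_i y_i$ with $n_i\in\{0,1,\ldots,K_i\}$. Suppose toward a contradiction that two distinct sequences $(n_i)\ne(m_i)$ in $\prod_j\{0,1,\ldots,K_j\}$ produce the same sum. Let $k$ be the least index where $n_k\ne m_k$ and, without loss of generality, assume $n_k>m_k$. Rearranging the two equal sums yields
$$
y_k \;\le\; (n_k-m_k)\,y_k \;=\; \sum_{i>k}(m_i-n_i)\,y_i \;\le\; \sum_{i>k} K_i\, y_i \;=\; R_k,
$$
which contradicts the semi-fast inequality $y_k>R_k$. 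Hence the hypothesis of Theorem \ref{warCan} is met and $E(x_n)$ is homeomorphic to the Cantor set.

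The only step requiring a small amount of care is the very first one: confirming that the definition of ``semi-fast convergence'' from \cite{BFPW2} is indeed equivalent to (or at least implies) the grouped strict inequality $y_i>R_i$. This is essentially a bookkeeping verification and contains no substantive difficulty; once it is carried out, the entire argument reduces to the single chain of inequalities above, with no real obstacle to overcome.
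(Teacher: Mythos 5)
Your proposal is correct and follows essentially the same route as the paper: both reduce the corollary to Theorem~\ref{warCan} and verify the uniqueness hypothesis by taking the first index $k$ with $n_k\ne m_k$ and contradicting the block-level inequality $y_k>\sum_{i>k}K_i y_i$, which is precisely the defining property of semi-fast convergence (so the ``bookkeeping'' step you flag is immediate). Your displayed chain of inequalities is just a rearrangement of the one in the paper's proof.
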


\begin{proof}
Let $\sum x_n=\sum(y_i;K_i)$ be semi-fast convergent. Suppose $\sum n_i y_i=\sum m_i y_i$ with $(n_i)\ne(m_i)$. Let $k$ be the first index with $n_k\ne m_k$, assume $n_k>m_k$. Since the series is semi-fast convergent, $y_k>\sum_{i>k}K_i y_i$. Thus,
$$
\sum m_i y_i \ <\ \sum_{i=1}^{k-1} m_i y_i + (m_k+1)y_k \ \le\ \sum n_i y_i,
$$
a contradiction. Hence each $x\in E(x_n)$ has a unique representation, and Theorem~\ref{warCan} applies.
\end{proof}

\begin{obs}
\label{lastcor}
If $x_n< r_n$ only for finitely many $n$, then $E(x_n)$ is either a multi-interval set or a Cantor set.
More precisely, if $x_n=r_n$ for all sufficiently large $n$, then $E(x_n)$ is a multi-interval set; otherwise, $E(x_n)$ is a Cantor set.
\end{obs}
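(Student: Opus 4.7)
The plan is to verify the two halves of the dichotomy separately.

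If $x_n=r_n$ for every $n\ge N_0$, then combining $x_n=r_n=x_{n+1}+r_{n+1}$ with $x_{n+1}=r_{n+1}$ forces $x_{n+1}=x_n/2$, so the tail $(x_n)_{n\ge N_0}$ is geometric with ratio $1/2$ and achievement set $[0,r_{N_0-1}]$. Hence $E=F_{N_0-1}+[0,r_{N_0-1}]$ is a finite union of closed intervals, i.e., a multi-interval set.

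In the remaining case, $x_n>r_n$ for infinitely many $n$, so Theorem~\ref{t1} rules out the multi-interval alternative and, by the Guthrie--Nymann Classification Theorem, it is enough to show that $E$ contains no nontrivial interval. I will argue by contradiction: assume $[a,b]\subset E$ with $a<b$, and fix $N$ so that $x_n\ge r_n$ for all $n>N$. Writing $E=F_N+E_N$, the finite closed cover $[a,b]=\bigcup_{f\in F_N}\bigl((f+E_N)\cap[a,b]\bigr)$ together with the Baire category theorem yields some $f\in F_N$ for which $(f+E_N)\cap[a,b]$ has nonempty interior in $[a,b]$, so $E_N$ itself contains a nontrivial interval $[\alpha,\beta]$. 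Relabeling $y_k:=x_{N+k}$ and $s_k:=r_{N+k}$, the tail satisfies $y_k\ge s_k$ for every $k$ and $y_k>s_k$ for infinitely many $k$, and it suffices to show that $E_N=E((y_k))$ contains no interval.

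The key technical input is an inductive claim: for every $k\ge 1$, the minimum positive difference of $F'_k:=\bigl\{\sum_{i\in A}y_i:A\subset\{1,\ldots,k\}\bigr\}$ is at least $s_k$. The base case is $y_1\ge s_1$. In the inductive step, two distinct elements of $F'_k=F'_{k-1}\cup(F'_{k-1}+y_k)$ on the same side are $\ge s_{k-1}\ge s_k$ apart by the inductive hypothesis; on opposite sides they differ either by $y_k\ge s_k$ (when the $F'_{k-1}$-parts coincide) or by $|f-f'-y_k|\ge |f-f'|-y_k\ge s_{k-1}-y_k=s_k$ (when the parts differ, using $|f-f'|\ge s_{k-1}=y_k+s_k>y_k$). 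Consequently the bricks of the tail iteration $I'_k:=\bigcup_{g\in F'_k}[g,g+s_k]$ at most touch; at every Kakeya step of the tail (where $y_k>s_k$), each component of $I'_{k-1}$---a chain of touching bricks---splits into sub-components of length at most $2s_k$, either a boundary sub-brick alone or two touching sub-bricks from adjacent bricks of $I'_{k-1}$. Since $[\alpha,\beta]\subset E_N\subset I'_k$ is connected and lies in a single component of $I'_k$, we obtain $\beta-\alpha\le 2s_k$, which tends to $0$ along the infinite Kakeya subsequence, forcing $\beta=\alpha$ and contradicting the choice of $[\alpha,\beta]$.

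The minimum-difference induction is the only step requiring real care; the Baire reduction, the single-brick analysis at a Kakeya step, and the passage to the limit are routine.
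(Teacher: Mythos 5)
Your proof is correct and takes essentially the same route as the paper: after reducing to a tail with $x_n\ge r_n$, you show that at each strict Kakeya index the components of the iterations are chains of at most two touching bricks, hence of length at most $2r_n\to 0$, so $E$ contains no interval and is a Cantor set by the Guthrie--Nymann Classification Theorem, while the eventually-equal case gives a multi-interval set. The only differences are presentational: you prove explicitly (via the minimum-gap induction and the geometric ratio-$\tfrac12$ tail) facts the paper merely asserts or cites from Kakeya, and you pass from $E$ to $E_N$ by a Baire category argument instead of invoking that $E$ and its remainders share the same topological type.
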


\begin{proof}
Since the topological types of $E$ and any of its remainders $E_k$ are the same, we may assume that $x_n\ge r_n$ for all $n$.
By Kakeya’s Theorem \cite[Cor. 21.14]{BFPW1}, $E(x_n)$ is a multi-interval set if $x_n=r_n$ eventually. Otherwise, let $(n_i)$ be the sequence of all indices with $x_{n_i}>r_{n_i}$. Clearly, $I_1=[0,\,\sum x_n]$ if $1\not\in (n_i)$, and $I_1=[0,\,r_1]\sqcup[x_1,\,\sum x_n]$ if $1=n_1$. The equality $I_n=I_{n+1}$ holds if and only if $n+1=n_i$ for some $i$. In general, $I_n$ is a union of $2^n$  non-overlapping bricks of order n, although there can be many pairs of bricks with a common endpoint. If $n+1=n_i$ for some $i$, then for each brick $B_t=[x_t,\,x_t+r_n]$ of order $n$ we have
$$
I_{n+1}\cap B_t\ =\ [x_t, x_t+r_{n+1}]\ \sqcup [x_t+x_{n+1},\,x_t+r_n].
$$
Thus, if $p$ was a common endpoint of two bricks of order $n$, then the interval $[p-r_{n+1},\,p+r_{n+1}]$ is the component interval of $I_{n+1}$ containing $p$. If an endpoint $p$ of a brick of order $n$ was not shared with another brick of order $n$, then the component interval of $I_{n+1}$ cointaing $p$ is either $[p-r_{n+1},\,p]$ or $[p,\,p+r_{n+1}]$. We conclude  that the maximal length of component intervals of an iteration $I_{n_i}$ is equal to $2r_{n_i}$ if there is an index $j<n_i$ with $x_j=r_j$, or is equal to $r_{n_i}$ otherwise. Thus, $E(x_n)=\bigcap_{i=1}^\infty I_{n_i}$ does not contain any interval and hence, by the Guthrie-Nymann Classification Theorem $E(x_n)$ is a Cantor set.
\end{proof}

It follows that if $x_n= r_n$ for all sufficiently large $n$ , then $E(x_n)$ is a multi-interval set such that  for any subsequence $(x_{n_k})$ the corresponding achievement set $E(x_{n_k})$ is either again a multi-interval set or a Cantor set. We believe that the duality of topological types of achievement set generated by subsequences in the corollary is exceptional, but we have no proof of it. Hence, we conclude this note with the following open problem.
\begin{OP}
Let $E(x_n)$ be a multi-interval set and $x_n<r_n$ for infinitely many $n$. Does there exist a subsequence $(x_{n_k})$ such that $E(x_{n_k})$ is a Cantorval ?
\end{OP}

\bibliographystyle{amsplain}

\end{document}